\tikzset{degil/.style={
            decoration={markings,
            mark= at position 0.5 with {
                  \node[transform shape] (tempnode) {$\backslash$};
                  }
              },
              postaction={decorate}
}
}
\newcommand*\circled[1]{\tikz[baseline=(char.base)]{
            \node[black,shape=rectangle,draw,inner sep=2pt] (char) {#1};}}
\DeclareMathOperator{\im}{Im}
\DeclareMathOperator{\morp}{End}
\DeclareMathOperator{\homo}{Hom}
\renewcommand{\hom}{\homo}
\DeclareMathOperator{\soc}{soc}
\theoremstyle{plain}
\newtheorem{theorem}{Theorem}[section] 
\newtheorem{lemma}[theorem]{Lemma}
\newtheorem{proposition}[theorem]{Proposition}
\newtheorem{corollary}[theorem]{Corollary}
\theoremstyle{definition}
\newtheorem{example}[theorem]{Example}
\theoremstyle{remark}
\newtheorem{remark}[theorem]{Remark} 
\newtheorem{note}[theorem]{Note}
\begin{document}

\pagenumbering{arabic}

\title{Modules With Descending Chain Conditions On Endoimages}

\author[Gera]{Theophilus Gera} 
\address{(Gera) Department of Mathematics, Sardar Vallabhbhai National Institute of Technology, Surat, Gujarat, India-395007}
\email{geratheophilus@gmail.com}
\thanks{The first author would like to thank SVNIT Alumni Association (SVNIT-AA) for the financial assistance.}

\author[Patel]{Manoj Kumar Patel} 
\address{(Patel) Department of Science and Humanities, National Institute of Technology Nagaland, Dimapur, Nagaland, India-797103}
\email{mkpitb@gmail.com}

\author[Gupta]{Ashok Ji Gupta}
\address{(Gupta) Department of Mathematical Sciences, Indian Institute of Technology (B.H.U.), Varanasi, Uttar Pradesh, India-221005}
\email{agupta.apm@itbhu.ac.in}

\begin{abstract}
    We investigate \emph{endoartinian modules}, which satisfy the descending chain condition on endoimages, and establish new characterizations that unify classical and generalized chain conditions. Over commutative rings, endoartinianity coincides with rings satisfying the strongly ACCR* with dim$(R)=0$ and strongly DCCR* conditions. For principally injective rings, the endoartinian and endonoetherian rings are equivalent. Addressing a question of Facchini and Nazemian, we provide a condition under which isoartinian and noetherian rings coincide, and we classify semiprime endoartinian rings as finite products of matrix rings over a division ring. We further show that endoartinianity is equivalent to the K\"{o}the rings over principal ideal rings with central idempotents, and characterize such rings as finite products of artinian uniserial rings.
\end{abstract}

\subjclass[2020]{Primary: 16P20, 16P60; Secondary: 13E10}

\keywords{Endoartinian modules, Isoartinian modules, K\"{o}the rings}

\maketitle

\section{Introduction}

The introduction of the ascending chain condition by Noether in 1921 \cite{noether1921idealtheorie} and the descending chain condition by Artin in 1927 \cite{artin1927zur} significantly advanced the structural theory of rings, building upon the foundational framework laid by Wedderburn. A central result in this context is the Hopkins–Levitzki theorem \citelist{\cite{hopkins1939rings} \cite{levitzki1940rings}}, which states that every artinian ring is noetherian, although the converse does not hold in general.

Varadarajan \cite{varadarajan1992hopfian} established a foundational connection between Hopfian and co-Hopfian modules. This relationship was later extended to a subclass of modules known as \emph{Generalized Fitting modules}, which include both strongly Hopfian and strongly co-Hopfian modules, as introduced by Hmaimou, Kaidi, and Campos in \cite{hmaimou2007generalized}. In the same work, the authors revisited Varadarajan’s result and proved that the equivalence between strongly Hopfian and strongly co-Hopfian rings remains valid within this broader framework—that is, when the ring \( R \) is viewed as a left or right module over itself \cite{hmaimou2007generalized}*{Corollary 3.5}.

Numerous generalizations of chain conditions have been introduced by varying submodule and ideal structures. In particular, Facchini and Nazemian \citelist{\cite{facchini2016modules} \cite{facchini2017artinian} \cite{facchini2019iso}} proposed a framework in 2016 that relaxes the equality requirements inherent in standard chain conditions, thereby revealing a range of new structural properties.

Inspired by these developments, several intermediate finiteness conditions have emerged, lying between classical noetherian/artinian behavior and weaker Hopfian-type notions. Building on the work of Hmaimou et al.; Gouaid, Hamed, and Benhissi \cite{gouaid2020endo-noetherian} revisited generalized Fitting modules in 2020. They introduced the concept of \emph{endonoetherian} modules and rings, situated between isonoetherian and strongly Hopfian structures. This notion had first appeared in a preprint by Kaidi and Campos in 2010 \cite{kaidimodules}.

A natural direction arising from these developments is the exploration of Hopkins–Levitzki-type theorems under broader chain conditions. This motivates our study of endoartinian rings and modules as part of a refined hierarchy that connects classical and generalized notions of finiteness. The diagram below illustrates the ring-theoretic chain condition implications central to our analysis, many of which are also discussed in Salem’s review \cite{gouaid2020endo-noetherianreview}:

\begin{small}
\begin{center}
    \begin{tikzcd}
        \text{Noetherian} \arrow[r,Rightarrow] & \text{Isonoetherian} \arrow[r,Rightarrow] & \text{Endonoetherian} \arrow[r,Rightarrow] & \text{strongly Hopfian} \arrow[r,Rightarrow] & \text{Hopfian} \\
        \text{Artinian} \arrow[r,Rightarrow] \arrow[u,Rightarrow, "\circled{HLT1}"] & \begin{array}{c}
        \text{Iso-} \\
        \text{Artinian}
    \end{array} \arrow[u, Rightarrow, degil, "\circled{HLT2} \ "] \ar[r, Rightarrow] & \begin{array}{c}
        \text{Endo-} \\
        \text{Artinian}
    \end{array} \ar[r, Rightarrow] \arrow[u, Rightarrow, degil, "\circled{HLT3} \ "] & \begin{array}{c}
        \text{strongly} \\
        \text{co-Hopfian}
    \end{array} \ar[r, Rightarrow] \arrow[u, Rightarrow, "\circled{HLT4}"] & \begin{array}{c}
         \text{co-} \\
         \text{Hopfian} 
    \end{array} \arrow[u, Rightarrow, "\circled{HLT5}"]
    \end{tikzcd}
\end{center}
\end{small}

Historically, HLT1 (artinian \( \Rightarrow \) noetherian) was independently proved by Hopkins and Levitzki in the noncommutative setting, building on Akizuki’s 1935 result for commutative rings. Varadarajan \cite{varadarajan1992hopfian}*{Proposition 1.10} later established HLT5, while Hmaimou et al. \cite{hmaimou2007generalized}*{Corollary 3.5} proved HLT4. In contrast, Facchini et al. \cite{facchini2016modules}*{Example 2.6} provided a counterexample to HLT2, showing that isoartinian rings are not necessarily isonoetherian.

Our systematic study of \emph{endoartinian modules} is motivated by their intermediate position between isoartinian and strongly co-Hopfian modules. While classical chain conditions such as artinianity control submodule structure globally, endoartinianity imposes finiteness constraints on the action of the endomorphism ring, capturing internal stabilization phenomena not visible through submodules alone. Such modules often satisfy the descending chain condition on direct summands (see Proposition \ref{endoart.noe.<=>genefitting}) and exhibit strong decomposition behavior. Thus, endoartinianity can enforce internal rigidity, even in the absence of classical finiteness conditions, by stabilizing internal image chains. This structural robustness motivates our investigation into its properties and its connections with other finiteness conditions.  

To deepen this perspective, we systematically investigate the structure of endoartinian modules and derive several characterizations that connect classical and generalized finiteness conditions. Over commutative rings, we show that endoartinianity is equivalent to the rings satisfying strongly ACCR* with dim$(R)=0$ and strongly DCCR* conditions (see Theorem \ref{endoartinian<=>strongly DCCR*<=>strongly ACCR*}). A Hopkins–Levitzki-type theorem is presented in this setting: for right principally injective rings, right endoartinianity and right endonoetherianity coincide (see Theorem \ref{endoartinian <=> endonoetherian}).

In response to a question of Facchini and Nazemian \cite{facchini2017artinian} regarding whether semiprime isoartinian rings are necessarily noetherian, we provide a structural condition under which the answer is indeed positive in the noncommutative case (see Proposition \ref{simple semiprime ring <=> noetherian}).  In an attempt to answer this, we show that endoartinianity and isoartinianity coincide under prime rings (see Corollary \ref{prime endoartinain <=> isoartinian}). We also provide a structural classification of semiprime right endoartinian rings as finite direct products of matrix rings over division rings (see Theorem \ref{semiprime endoartinian <=> f. d.p. matrix rings}).

In \Cref{sec3}, we relate endoartinianity to K\"{o}the rings. We show that for principal ideal rings with central idempotents, endoartinianity is equivalent to being a K\"{o}the ring, and we characterize such rings as finite products of artinian uniserial rings (see Theorem \ref{thm:kothe-endoartinian}). 

Throughout, unless stated otherwise, \( R \) denotes an associative ring with unity, and \( M \) a unital right \( R \)-module. We denote submodules and direct summands by \( \leq \) and \( \leq^{\oplus} \), respectively, and adopt standard notation as in \citelist{\cite{lam1999lectures} \cite{lam2001firstcourse}}.

\section{Properties} \label{sec2}

This section establishes the fundamental properties of endoartinian rings and modules, and provides examples that distinguish them from related concepts. Recall from \cite{kaidimodules} that a (right) module \(M\) is \emph{endoartinian} if every descending chain of endoimages
\[
\im(f_1) \supseteq \im(f_2) \supseteq \cdots
\]
stabilizes, i.e., there exists \( n \in \mathbb{N} \) such that \(\im(f_k)=\im(f_n)\) for all \(k \geq n\), where each \(f_i \in S := \morp_R(M)\) and \(S\) acts on \(M\) from the left. Similarly, a (right) module $M$ is \emph{endonoetherian} if every ascending chain of endokernels stabilizes.

Endoartinian modules strictly generalize isoartinian modules and form a subclass of strongly co-Hopfian modules, as the following examples illustrate.

\begin{example} \label{examples1}
\begin{enumerate}
    \item Strongly \(\pi\)-regular modules are endoartinian; see \cite{armendariz1978injective}.
    
    \item The \(\mathbb{Z}\)-module \(\mathbb{Q}_{\mathbb{Z}} := \hom_{\mathbb{Z}}(\mathbb{Q}, \mathbb{Q}/\mathbb{Z})\) is endoartinian.

    \item Let $\Bbbk$ be a field and $V=\oplus_{i\geq 1}\Bbbk e_i$ a vector space with countable basis. Let $R\subseteq \morp_{\Bbbk} (V)$ be the ring of all column-finite upper-triangular matrices with respect to the basis $\{e_i\}$. Consider $V$ as a right $R$-module via the natural action. For $n\geq 1$, define $U_n=$span$\{e_n,e_{n+1},\cdots\}$. Then $V$ is endoartinian as $\morp_R(V)=\Bbbk.$id$_V$, so the only endoimages are $0$ and $V$.

    However, $V$ is \emph{not} isoartinian as $U_1\supsetneq U_2 \supsetneq \cdots$ is an infinite descending chain of submodules. The simple tops $U_n/U_{n+1}$ are 1-dimensional and pairwise non-isomorphic (afforded by distinct characters $\chi_n(r)=r_{nn}$), so $U_n\not\cong U_{m}$ for $n\neq m$.

    \item Let \(R = \prod_{n=1}^\infty \mathbb{F}_2\), the infinite direct product of copies of the field \(\mathbb{F}_2\). Then \(R\) is a commutative Boolean ring, hence von Neumann regular. The right regular module \(R_R\) is \emph{strongly co-Hopfian}: for any \(a \in R\), idempotence implies \(a^n = a\) for all \(n \geq 1\), so the chain \(\im(\lambda_{a^n}) = aR\) stabilizes immediately. However, \(R_R\) is \emph{not} endoartinian: for \(e_k = (0,\dots,0,1,1,\dots)\) (first \(k\) entries zero), the images \(\im(\lambda_{e_k}) = e_kR\) form a strictly descending chain
    \[
    e_1R \supsetneq e_2R \supsetneq e_3R \supsetneq \cdots,
    \]
    so endoartinianity fails. This demonstrates that strong co-Hopfianity does not imply endoartinianity.
\end{enumerate} \qed
\end{example}

From Example~\ref{examples1}(2), we see that \(\mathbb{Q}_{\mathbb{Z}}\) is endoartinian, whereas \(\mathbb{Z}_{\mathbb{Z}}\) is not even isoartinian, despite \(\mathbb{Z}\) being commutative (see \cite{facchini2016modules}*{Lemma 4.10(1)}). Thus, the endoartinian property is not, in general, inherited by submodules.

\begin{proposition} \label{submodule etc.}
Let \(M\) be a right \(R\)-module.
\begin{enumerate}
    \item If \(N \leq^{\oplus} M\) and \(M\) is endoartinian (respectively, endonoetherian), then \(N\) is endoartinian (respectively, endonoetherian).
    
    \item Suppose \(M\) is both endoartinian and endonoetherian. For every sequence \(\{f_n\}_{n\in\mathbb{N}} \subseteq \morp_R(M)\) satisfying \(f_i f_j = f_{i+j}\) for all \(i,j\in\mathbb{N}\), there exists \(n \in \mathbb{N}\) such that
    \[
    \ker(f_n) \cap \im(f_n) = 0.
    \]
\end{enumerate}
\end{proposition}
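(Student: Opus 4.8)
The plan is to treat the two parts by separate mechanisms: part (1) by transporting endoimage and endokernel chains across the splitting of the direct summand, and part (2) by reducing the hypothesis to powers of a single endomorphism and invoking a Fitting-type argument that combines both chain conditions.

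For part (1), I would fix a decomposition \(M = N \oplus N'\) with inclusion \(\iota \colon N \to M\) and projection \(\pi \colon M \to N\), so that \(\pi\iota = \id_N\) and \(e := \iota\pi\) is the idempotent of \(\morp_R(M)\) with image \(N\). The central device is the map \(\morp_R(N) \to \morp_R(M)\), \(g \mapsto \iota g \pi\), which realizes \(\morp_R(N)\) as \(e\,\morp_R(M)\,e\). Given a descending chain of endoimages \(\im(g_1) \supseteq \im(g_2) \supseteq \cdots\) in \(N\) (with \(g_i \in \morp_R(N)\)), I would set \(f_i = \iota g_i \pi \in \morp_R(M)\) and compute \(\im(f_i) = \iota(\im(g_i))\). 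Since \(\iota\) is injective, the \(\im(f_i)\) form a descending chain of endoimages in \(M\) that mirrors the original one; endoartinianity of \(M\) stabilizes it, and applying \(\iota^{-1}\) pulls the stabilization back to \(N\). The endonoetherian case is dual: for an ascending chain of endokernels \(\ker(g_1) \subseteq \ker(g_2) \subseteq \cdots\), the same \(f_i\) satisfy \(\ker(f_i) = \pi^{-1}(\ker(g_i))\), an ascending chain of endokernels in \(M\). Because \(\pi|_N = \id_N\) gives \(\ker(f_i) \cap N = \ker(g_i)\), stabilization in \(M\) descends to \(N\), so \(N\) is endonoetherian.

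For part (2), I would first observe that the relation \(f_i f_j = f_{i+j}\) forces \(f_n = f_1^n\) for all \(n \geq 1\) by a one-line induction (setting \(f := f_1\)). Thus the hypothesis reduces to the statement that \(M\) is simultaneously endoartinian and endonoetherian, applied to the powers of the single endomorphism \(f\). Endoartinianity stabilizes the descending chain \(\im(f) \supseteq \im(f^2) \supseteq \cdots\) at some index \(a\), and endonoetherianity stabilizes the ascending chain \(\ker(f) \subseteq \ker(f^2) \subseteq \cdots\) at some index \(b\). Taking \(n = \max(a,b)\), I would then run the Fitting argument: if \(x \in \ker(f^n) \cap \im(f^n)\), write \(x = f^n(y)\), so that \(f^{2n}(y) = f^n(x) = 0\), hence \(y \in \ker(f^{2n}) = \ker(f^n)\) and therefore \(x = f^n(y) = 0\). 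Since \(f_n = f^n\), this gives \(\ker(f_n) \cap \im(f_n) = 0\).

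The conceptual content here is modest, so the real care lies in part (1): I must verify that the transport maps genuinely preserve strict containments, so that a non-stabilizing chain in \(N\) could never be flattened into a stabilizing chain in \(M\). This reduces to the injectivity of the two correspondences \(\im(g) \mapsto \iota(\im(g))\) and \(\ker(g) \mapsto \pi^{-1}(\ker(g))\), which follow respectively from injectivity of \(\iota\) and from the identity \(\pi^{-1}(\ker g) \cap N = \ker g\). Once these order-embeddings are in place, both monotonicity directions follow formally, and part (2) is then a clean application of the two chain conditions to \(\{f^n\}\).
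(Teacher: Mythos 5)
Your proposal is correct and follows essentially the same route as the paper: your map \(g \mapsto \iota g \pi\) is precisely the paper's ``extend by zero on the complement'' construction, with the same identifications \(\im(f_i)=\iota(\im(g_i))\) and \(\ker(f_i)=\pi^{-1}(\ker(g_i))=\ker(g_i)\oplus N'\), and the order-embedding check you emphasize is exactly what makes the pullback of stabilization work in both directions. Your part (2) is the same Fitting-type computation as the paper's; the only cosmetic difference is that you first reduce via \(f_n=f_1^{\,n}\) to powers of a single endomorphism, whereas the paper argues directly from the relation \(f_n f_n = f_{2n}\) together with the stabilization \(\ker(f_{2n})=\ker(f_n)\).
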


\begin{proof}
\begin{enumerate}
    \item Write \(M = N \oplus L\). Given \(\{h_n\}\subseteq\morp_R(N)\), extend each \(h_n\) to \(f_n\in\morp_R(M)\) by setting \(f_n(n+\ell):=h_n(n)\) for \(n\in N,\ \ell\in L\) (equivalently, extend \(h_n\) by \(0\) on \(L\)). Then \(\im(f_n)=\im(h_n)\oplus 0\) and \(\ker(f_n)=\ker(h_n)\oplus L\). Hence stabilization of the chains \(\{\im(f_n)\}\) (respectively, \(\{\ker(f_n)\}\)) in \(M\) forces stabilization of \(\{\im(h_n)\}\) (respectively, \(\{\ker(h_n)\}\)) in \(N\). Thus \(N\) is endoartinian (respectively, endonoetherian).
    
    \item Since \(M\) is both endoartinian and endonoetherian, the descending chain \(\{\im(f_n)\}\) and the ascending chain \(\{\ker(f_n)\}\) both stabilize. Choose \(n\) with \(\im(f_n)=\im(f_{n+1})\) and \(\ker(f_n)=\ker(f_{n+1})\). If \(a\in\ker(f_n)\cap\im(f_n)\), write \(a=f_n(b)\) for some \(b\in M\). Then
    \[
    0=f_n(a)=f_n(f_n(b))=f_{2n}(b),
    \]
    so \(b\in\ker(f_{2n})\). Using the stabilization \(\ker(f_{2n})=\ker(f_n)\) we get \(b\in\ker(f_n)\), hence \(a=f_n(b)=0\). Therefore \(\ker(f_n)\cap\im(f_n)=0\).
\end{enumerate}
\end{proof}

\begin{lemma}[Fitting-type decomposition] \label{Fitting-lemma}
Let \(M\) be both endoartinian and endonoetherian, and let \(\{f_n\}_{n \in \mathbb{N}} \subseteq \morp_R(M)\) satisfy \(f_i f_j = f_{i+j}\) for all \(i,j\in\mathbb{N}\). Then there exists \(n \in \mathbb{N}\) such that
\[
M = \ker(f_n) \oplus \im(f_n).
\]
\end{lemma}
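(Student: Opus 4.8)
The plan is to reduce the statement to the classical Fitting argument applied to the powers of a single endomorphism, using the two chain conditions to control exactly the relevant chains rather than all submodules. First I would observe that the relation \(f_i f_j = f_{i+j}\) forces \(f_n = f_1^{\,n}\) for every \(n\); writing \(f := f_1\), the family is simply \(\{f^n\}\). Consequently the descending chain of endoimages \(\im(f) \supseteq \im(f^2) \supseteq \cdots\) and the ascending chain of endokernels \(\ker(f) \subseteq \ker(f^2) \subseteq \cdots\) are genuine chains of endoimages and endokernels, so endoartinianity makes the former stabilize and endonoetherianity makes the latter stabilize. I would then fix a single index \(n\) lying beyond both stabilization points, so that \(\im(f^k) = \im(f^n)\) and \(\ker(f^k) = \ker(f^n)\) for all \(k \ge n\); in particular \(\im(f^{2n}) = \im(f^n)\) and \(\ker(f^{2n}) = \ker(f^n)\).

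With this \(n\) fixed, the intersection \(\ker(f^n) \cap \im(f^n) = 0\) is exactly the conclusion of Proposition \ref{submodule etc.}(2) (the chosen \(n\) may be taken to be a common stabilization index, which is all that proof requires). It therefore remains to prove the complementary inclusion \(M = \ker(f^n) + \im(f^n)\). For this I would take an arbitrary \(x \in M\) and note that \(f^n(x) \in \im(f^n) = \im(f^{2n})\), so \(f^n(x) = f^{2n}(y)\) for some \(y \in M\). Then \(f^n\bigl(x - f^n(y)\bigr) = f^n(x) - f^{2n}(y) = 0\), so \(x - f^n(y) \in \ker(f^n)\), and writing \(x = \bigl(x - f^n(y)\bigr) + f^n(y)\) exhibits \(x\) as a sum of an element of \(\ker(f^n)\) and an element of \(\im(f^n)\). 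Combining the trivial intersection with this sum yields \(M = \ker(f^n) \oplus \im(f^n)\).

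The point to be careful about is that we cannot appeal to the classical Fitting lemma directly, since we have chain conditions only on endoimages and endokernels rather than on all submodules; the argument works precisely because the chains \(\{\im(f^n)\}\) and \(\{\ker(f^n)\}\) are themselves of the controlled type. The step I expect to carry the real content is the sum \(M = \ker(f^n) + \im(f^n)\), which relies essentially on the stabilization of the image chain (the endoartinian hypothesis), whereas the intersection relies on both conditions through Proposition \ref{submodule etc.}(2). The only genuine bookkeeping obstacle is ensuring a single index \(n\) simultaneously stabilizes both chains, which is handled by taking the maximum of the two stabilization indices.
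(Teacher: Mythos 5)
Your proposal is correct and follows essentially the same route as the paper's proof: the intersection $\ker(f_n)\cap\im(f_n)=0$ via Proposition~\ref{submodule etc.}(2), and the sum $M=\ker(f_n)+\im(f_n)$ by pulling $f_n(x)$ back through $\im(f_n)=\im(f_{2n})$. Your preliminary observation that $f_n=f_1^{\,n}$ and your explicit care in choosing a single index beyond both stabilization points are harmless refinements (the latter arguably tidier than the paper's phrasing), but the substance of the argument is identical.
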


\begin{proof}
    Because \(M\) is both endoartinian and endonoetherian, the chains \(\{\im(f_n)\}\) and \(\{\ker(f_n)\}\) stabilize. Choose \(n\) with \(\im(f_n)=\im(f_{n+1})\) and \(\ker(f_n)=\ker(f_{n+1})\). By Proposition~\ref{submodule etc.}(2) we have \(\ker(f_n)\cap\im(f_n)=0\).

    To show \(M=\ker(f_n)+\im(f_n)\), take \(x\in M\). Since \(\im(f_n)=\im(f_{2n})\), there exists \(y\in M\) with \(f_n(x)=f_{2n}(y)=f_n(f_n(y))\). Hence
\[
f_n\big(x - f_n(y)\big)=f_n(x)-f_n(f_n(y))=0,
\]
so \(x-f_n(y)\in\ker(f_n)\). Thus \(x=(x-f_n(y))+f_n(y)\) with \((x-f_n(y))\in\ker(f_n)\) and \(f_n(y)\in\im(f_n)\). Therefore \(M=\ker(f_n)\oplus\im(f_n)\).
\end{proof}

\begin{note}
    A module that is both endoartinian and endonoetherian satisfies the decomposition \( M = \ker(f_n) \oplus \im(f_n) \) for some \( f_n \in \morp_R(M) \) as in Lemma \ref{Fitting-lemma}, and is thus a \emph{strongly generalized Fitting module} in the sense of \cite{hmaimou2007generalized}. \qed
\end{note}

We now prove that endoartinian (respectively, endonoetherian) modules satisfy a finiteness condition on direct summands, which aligns with one of the defining properties of generalized Fitting modules.

\begin{proposition} \label{endoart.noe.<=>genefitting}
Let \( M \) be an endoartinian (resp., endonoetherian) module. Then \( M \) satisfies the descending (resp., ascending) chain condition on direct summands.
\end{proposition}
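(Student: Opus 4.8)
The plan is to exploit the elementary but decisive fact that a submodule $N \leq M$ is a direct summand if and only if $N = \im(e)$ for some idempotent $e \in S := \morp_R(M)$. Indeed, if $M = N \oplus L$ then the projection $e$ onto $N$ along $L$ is an $R$-linear endomorphism with $\im(e) = N$ and $e^2 = e$, and conversely, for any idempotent $e$ the image $\im(e)$ is a summand with complement $\ker(e)$. This dictionary lets me translate a chain of direct summands directly into a chain of endoimages (or, in the ascending case, endokernels), after which the hypothesis applies verbatim.

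For the endoartinian case I would begin with an arbitrary descending chain of direct summands $N_1 \supseteq N_2 \supseteq \cdots$. Choosing for each $i$ an idempotent $e_i \in S$ with $\im(e_i) = N_i$, the chain becomes a descending chain of endoimages $\im(e_1) \supseteq \im(e_2) \supseteq \cdots$. Endoartinianity then forces stabilization: there is $n$ with $\im(e_k) = \im(e_n)$, i.e. $N_k = N_n$, for all $k \geq n$. Hence $M$ satisfies the DCC on direct summands.

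For the endonoetherian case the images must be recast as kernels so that the endonoetherian hypothesis (stabilization of ascending chains of endokernels) can be invoked. Here I would use the complementary idempotent: for any idempotent $e$ one has $\ker(\id_M - e) = \im(e)$, since $(\id_M - e)x = 0$ gives $x = e(x) \in \im(e)$, and conversely $(\id_M - e)e(y) = e(y) - e^2(y) = 0$. Thus, given an ascending chain of summands $N_1 \subseteq N_2 \subseteq \cdots$ with $N_i = \im(e_i)$, I set $g_i := \id_M - e_i \in S$ and obtain the ascending chain of endokernels $\ker(g_1) \subseteq \ker(g_2) \subseteq \cdots$. Endonoetherianity makes this stabilize, and therefore so does $\{N_i\}$.

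The argument is short, and the only point requiring care is the bookkeeping that turns summands into images and, in the ascending case, into kernels via the complementary idempotent $\id_M - e_i$; one must confirm that these are genuine $R$-linear endomorphisms (automatic, since projections along submodule decompositions are $R$-linear) and that the identity $\im(e) = \ker(\id_M - e)$ holds exactly. I do not anticipate any substantive obstacle beyond verifying these identities, so the main work is conceptual rather than computational.
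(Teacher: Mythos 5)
Your proof is correct and takes essentially the same approach as the paper: realize each direct summand as the image of an idempotent projection in $\morp_R(M)$ and invoke the chain condition on endoimages. The paper dismisses the ascending case with ``the ascending case is dual,'' whereas you make that duality explicit via the identity $\ker(\id_M - e_i) = \im(e_i)$, which is exactly the intended reading.
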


\begin{proof}
Let \( M_1 \supseteq M_2 \supseteq \cdots \) be a descending chain of direct summands of \( M \). For each \( i \), let \( f_i \in \morp_R(M) \) be the projection onto \( M_i \). Then the chain 
\[
\im(f_1) \supseteq \im(f_2) \supseteq \cdots
\]
is a descending chain of endoimages of \( M \), which stabilizes since \( M \) is endoartinian. Hence the chain of direct summands stabilizes. The ascending case is dual.
\end{proof}

The following examples show that the classes of endoartinian and endonoetherian modules are not comparable in general.

\begin{example}
\begin{enumerate}
    \item Let \( M = \mathbb{Z}(p^\infty) \), the Prüfer \( p \)-group, viewed as a \( \mathbb{Z} \)-module. Then \( \morp_{\mathbb{Z}}(M) \cong \mathbb{Z}_p \), the ring of \( p \)-adic integers. Every nonzero endomorphism is injective, and the endoimage of the zero map is trivial. Thus, any descending chain of endoimages stabilizes, so \( M \) is endoartinian. However, \( M \) is not endonoetherian: consider the ascending chain of endokernels
    \[
    \ker(\varphi_n) = \{ m \in M : p^n m = 0 \} \cong \mathbb{Z}/p^n\mathbb{Z},
    \]
    where \( \varphi_n \) denotes multiplication by \( p^n \). This chain does not stabilize.

    \item The module \( \mathbb{Z} \) is endonoetherian, since every ascending chain of endokernels stabilizes. However, it is not endoartinian: define endomorphisms \( \varphi_n : \mathbb{Z} \to \mathbb{Z} \) by \( \varphi_n(x) = n!x \). Then
    \[
    \im(\varphi_1) \supsetneq \im(\varphi_2) \supsetneq \cdots
    \]
    forms a strictly descending chain, since \( \im(\varphi_n) = n!\mathbb{Z} \supsetneq (n+1)!\mathbb{Z} \) for all \( n \).
\end{enumerate} \qed
\end{example}

\begin{proposition} \label{right endoartinian principal}
A ring \(R\) is right endoartinian if and only if for every sequence 
\(\{a_i\} \subseteq R\), there exists \(n \in \mathbb{N}\) such that
\[
a_n R = a_{n+1} R.
\]
Equivalently, there exist \(b, c \in R\) such that
\[
a_n = a_{n+1} b 
\quad \text{and} \quad 
a_{n+1} = a_n c.
\]
\end{proposition}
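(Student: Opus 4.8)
The plan is to translate the abstract descending chain condition on endoimages of the regular module \(R_R\) into a concrete statement about principal right ideals, and then to unwind ideal equality into the element-wise divisibility relations. The decisive first step is to compute \(S = \morp_R(R_R)\). Any \(R\)-linear endomorphism \(f\) of \(R_R\) is determined by \(f(1)\), since \(f(x) = f(1 \cdot x) = f(1)x\); writing \(a = f(1)\) gives \(f = \lambda_a\), left multiplication by \(a\). Hence \(S\) consists exactly of the left multiplications \(\lambda_a\) (\(a \in R\)), and \(\im(\lambda_a) = aR\). Thus the endoimages of \(R_R\) are precisely the principal right ideals of \(R\), and a descending chain of endoimages is precisely a descending chain \(a_1 R \supseteq a_2 R \supseteq \cdots\) of principal right ideals.

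With this identification, right endoartinianity of \(R\) is, by definition, the assertion that every such descending chain of principal right ideals stabilizes. I would then prove the two directions. For the forward direction, if \(R\) is right endoartinian and \(\{a_i\}\) is such that \(a_1 R \supseteq a_2 R \supseteq \cdots\), stabilization yields an \(n\) with \(a_k R = a_n R\) for all \(k \ge n\), in particular \(a_n R = a_{n+1} R\). For the converse, I would argue by contraposition: if some descending chain of endoimages fails to stabilize, one can extract a strictly descending subsequence \(a_{n_1} R \supsetneq a_{n_2} R \supsetneq \cdots\), which is a sequence in which no two consecutive terms generate the same right ideal, contradicting the hypothesis. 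Hence the stated condition forces every descending chain of principal right ideals to stabilize, i.e., \(R\) is right endoartinian.

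Finally, the equivalence with the element-wise formulation is routine: \(a_n R = a_{n+1} R\) holds if and only if \(a_n \in a_{n+1} R\) and \(a_{n+1} \in a_n R\), that is, if and only if there exist \(b, c \in R\) with \(a_n = a_{n+1} b\) and \(a_{n+1} = a_n c\); the two containments are exactly membership of each generator in the other principal right ideal.

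The step I expect to require the most care is the precise logical form of the chain condition in the converse direction. The hypothesis supplies only a single coincidence \(a_n R = a_{n+1} R\) for each chain, so I must ensure that ruling out strictly descending subsequences genuinely yields full stabilization; the clean way to see this is that a non-stabilizing descending chain always contains a strictly descending subchain, and such a subchain has no consecutive coincidence at all. I will also be careful to treat the sequence \(\{a_i\}\) as one whose associated principal right ideals form a descending chain, matching the chains of endoimages that appear in the definition.
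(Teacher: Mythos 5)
Your proof is correct and takes essentially the same route as the paper's: identify \(\morp_R(R_R)\) with left multiplications so that endoimages are exactly the principal right ideals, then unwind the equality \(a_nR = a_{n+1}R\) into the two divisibility relations via mutual membership of the generators. In fact, your converse direction is slightly more careful than the paper's own proof, which passes from ``every chain has a consecutive coincidence'' to ``every chain stabilizes'' as if immediate by definition, whereas you close this quantifier gap explicitly by extracting a strictly descending subsequence from any non-stabilizing chain.
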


\begin{proof}
Recall that \(\morp_R(R_R) = \{ \rho_a : a \in R \}\), where 
\(\rho_a(x) = ax\) for all \(x \in R\). A descending chain of 
endoimages corresponds to a chain of principal right ideals:
\[
a_1 R \supseteq a_2 R \supseteq a_3 R \supseteq \cdots.
\]
By definition, \(R\) is right endoartinian if and only if every such 
chain stabilizes, i.e., there exists \(n \in \mathbb{N}\) such that 
\(a_n R = a_{n+1} R\). This equality holds if and only if there exist 
\(b, c \in R\) such that \(a_n = a_{n+1} b\) and \(a_{n+1} = a_n c\).
\end{proof}

The following structural characterization is due to Kaidi and Campos \cite{kaidimodules} and is reproduced here with their kind permission.

\begin{proposition} \label{endonoe <=>acc prin. ann.; endoart <=>dcc on prin. ide.}
Let \( R \) be a ring. Then:
\begin{enumerate}
    \item \( R \) is right endonoetherian if and only if \( R \) satisfies the ascending chain condition on principal right annihilators; that is, for every ascending chain
    \[
    \operatorname{r.ann}(a_1) \subseteq \operatorname{r.ann}(a_2) \subseteq \cdots
    \]
    with \( a_i \in R \), the chain stabilizes.

    \item \( R \) is right endoartinian if and only if \( R \) satisfies the descending chain condition on principal right ideals; equivalently, \( R \) is left perfect.
\end{enumerate}
\end{proposition}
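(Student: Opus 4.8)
The plan is to transport both chain conditions across the canonical identification $\morp_R(R_R) = \{\rho_a : a \in R\}$ recorded in Proposition~\ref{right endoartinian principal}, under which every endomorphism of the right module $R_R$ is a left multiplication $\rho_a(x) = ax$. The two computations that drive the entire argument are
\[
\ker(\rho_a) = \{x \in R : ax = 0\} = \rann(a)
\qquad\text{and}\qquad
\im(\rho_a) = aR .
\]
Hence, as $a$ ranges over $R$, the endokernels of $R_R$ are precisely the principal right annihilators and the endoimages are precisely the principal right ideals. Each part then reduces to matching a chain on one side with the corresponding chain on the other.

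For part (1) I would simply note that an ascending chain of endokernels is exactly a chain $\rann(a_1) \subseteq \rann(a_2) \subseteq \cdots$ of principal right annihilators, and conversely every such chain arises from the endomorphisms $\rho_{a_i}$. Since $R_R$ is endonoetherian precisely when every ascending chain of endokernels stabilizes, this yields the equivalence with the ascending chain condition on principal right annihilators with no further work.

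For part (2) the first equivalence is the same bookkeeping in the dual direction: a descending chain of endoimages is exactly a chain $a_1 R \supseteq a_2 R \supseteq \cdots$ of principal right ideals, so right endoartinianity coincides with the descending chain condition on principal right ideals, which simply re-expresses Proposition~\ref{right endoartinian principal}. For the second equivalence I would invoke Bass's Theorem~P, which states that $R$ is left perfect if and only if it satisfies the descending chain condition on principal right ideals (see \cite{lam2001firstcourse}); concatenating the two equivalences gives the result.

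The only step carrying content beyond a direct translation is the passage between the descending chain condition on principal right ideals and left perfectness, which is Bass's theorem and is cited rather than reproved. The main point to watch is the left/right convention: because endomorphisms of $R_R$ act by left multiplication, endoimages turn out to be \emph{right} ideals, and Bass's theorem pairs the descending chain condition on principal right ideals with \emph{left} (not right) perfectness, so the sides must be tracked consistently to avoid a spurious transposition.
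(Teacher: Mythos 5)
Your proof is correct. There is, however, nothing in the paper to compare it against: the paper states this proposition as a result of Kaidi and Campos \cite{kaidimodules}, ``reproduced here with their kind permission,'' and gives no proof at all. What you have written is the natural way to supply the missing argument, and it is consistent with what the paper \emph{does} prove nearby: the identification $\morp_R(R_R)=\{\rho_a : a\in R\}$ together with $\im(\rho_a)=aR$ is exactly the computation in the paper's Proposition~\ref{right endoartinian principal}, which already establishes the first equivalence in part (2); your part (1) is the kernel-side mirror of that computation, using $\ker(\rho_a)=\rann(a)$, so the set of endokernels of $R_R$ is precisely the set of principal right annihilators. The only ingredient with real content, the equivalence of the descending chain condition on principal right ideals with left perfectness, is correctly delegated to Bass's Theorem~P (see \cite{lam2001firstcourse}), and you have the left/right pairing stated the right way around --- endomorphisms of $R_R$ are left multiplications, their images are principal \emph{right} ideals, and DCC on those is equivalent to \emph{left} perfectness. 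In short: correct, complete modulo the standard citation of Bass, and in the same spirit as the surrounding text of the paper.
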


A ring $R$ is endoartinian (resp. endonoetherian) if it is left and right endoartinian (resp. endonoetherian).

\begin{corollary} \label{subrings}
Let \( R \) be a right endoartinian ring and let \( I \subseteq R \) be a two-sided ideal. Then \( R/I \) is right endoartinian.
\end{corollary}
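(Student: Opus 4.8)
The plan is to prove the statement through the characterization of right endoartinianity as the descending chain condition on principal right ideals, established in Proposition~\ref{endonoe <=>acc prin. ann.; endoart <=>dcc on prin. ide.}(2). Writing $\overline{x} := x + I$ for the image of $x$ under the projection $\pi \colon R \to R/I$, the principal right ideal generated by $\overline{a}$ in $R/I$ is $\overline{a}(R/I) = (aR + I)/I$. Thus a descending chain of principal right ideals in $R/I$ has the form $(a_1 R + I)/I \supseteq (a_2 R + I)/I \supseteq \cdots$, and our task reduces to showing that every such chain stabilizes, whence $R/I$ is right endoartinian by the same characterization applied to the quotient.

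The main obstacle is that the ideals $a_i R + I$ are generally \emph{not} principal in $R$, so the chain downstairs does not lift termwise to a chain of principal right ideals upstairs in any naive way; the descending chain condition on principal right ideals in $R$ therefore does not apply directly to the lifted ideals. I would circumvent this by constructing, from the given chain, a genuine descending chain of \emph{principal} right ideals in $R$ that projects onto it. Since $R$ has unity, the containment $\overline{a_{i+1}}(R/I) \subseteq \overline{a_i}(R/I)$ gives $\overline{a_{i+1}} \in \overline{a_i}(R/I)$, so $\overline{a_{i+1}} = \overline{a_i}\,\overline{r_i}$ for some $r_i \in R$. Setting $b_1 := a_1$ and $b_{i+1} := b_i r_i$ inductively, one checks that $\overline{b_i} = \overline{a_i}$ for every $i$ (using $\overline{b_{i+1}} = \overline{b_i}\,\overline{r_i} = \overline{a_i}\,\overline{r_i} = \overline{a_{i+1}}$) and that $b_{i+1} \in b_i R$, so that $b_1 R \supseteq b_2 R \supseteq \cdots$ is an honest descending chain of principal right ideals in $R$. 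Verifying the invariant $\overline{b_i} = \overline{a_i}$ at each stage is the one point requiring care, since it is exactly what lets stabilization upstairs transfer faithfully downstairs.

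To conclude, since $R$ is right endoartinian, Proposition~\ref{endonoe <=>acc prin. ann.; endoart <=>dcc on prin. ide.}(2) gives that the chain $b_1 R \supseteq b_2 R \supseteq \cdots$ stabilizes in $R$: there is an $N$ with $b_k R = b_N R$ for all $k \geq N$. Applying $\pi$ and using $\overline{b_i}(R/I) = (b_i R + I)/I$, the equalities $b_k R = b_N R$ yield $\overline{a_k}(R/I) = \overline{b_k}(R/I) = \overline{b_N}(R/I) = \overline{a_N}(R/I)$ for all $k \geq N$. Hence the original chain in $R/I$ stabilizes, and by Proposition~\ref{right endoartinian principal} (equivalently, the descending chain condition on principal right ideals) applied to $R/I$, the quotient $R/I$ is right endoartinian.
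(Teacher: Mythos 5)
Your proposal is correct, but it takes a genuinely different route from the paper. The paper's proof also starts from Proposition~\ref{endonoe <=>acc prin. ann.; endoart <=>dcc on prin. ide.}(2), but it uses the other clause of that equivalence: right endoartinian $\Leftrightarrow$ left perfect, and then simply cites the known fact (Anderson--Fuller, Corollary 28.7) that a quotient of a left perfect ring is left perfect, before translating back. You instead work with the DCC-on-principal-right-ideals clause and prove the descent to $R/I$ by hand, and your key observation is exactly the right one: the chain $(a_iR+I)/I$ cannot be lifted termwise, since $a_iR+I$ need not be principal and containment modulo $I$ does not give containment in $R$, so you must re-choose representatives $b_i$ with $\overline{b_i}=\overline{a_i}$ and $b_{i+1}\in b_iR$ before invoking stabilization upstairs; that inductive construction and the invariant $\overline{b_i}=\overline{a_i}$ are verified correctly. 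What your approach buys is self-containedness and elementarity: it needs nothing outside the paper's own Propositions~\ref{right endoartinian principal} and \ref{endonoe <=>acc prin. ann.; endoart <=>dcc on prin. ide.}, and in effect it re-proves (for the special case needed here) the quotient-stability of left perfectness that the paper imports as a citation. What the paper's route buys is brevity and a visible link to the structure theory of perfect rings, which it continues to exploit in later results.
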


\begin{proof}
By Proposition \ref{endonoe <=>acc prin. ann.; endoart <=>dcc on prin. ide.}, $R$ is right endoartinian if and only if it is left perfect. Since $R$ is right endoartinian, it is left perfect. Then by \cite{anderson2012rings}*{Corollary 28.7}, the quotient ring $R/I$ is also left perfect. Applying the characterization again, $R/I$ is right endoartinian.
\end{proof}

\begin{remark}
The converse of Corollary \ref{subrings} does not hold in general. For instance, let \( R = \mathbb{Z} \) and consider the ideal \( I = (n) \) for some \( n \geq 2 \). Then \( R/I \cong \mathbb{Z}/n\mathbb{Z} \) is finite, hence artinian and endoartinian. However, \( \mathbb{Z} \) is not semilocal and thus not perfect, so it is not endoartinian. \qed
\end{remark}

Lu introduced one of the earliest generalizations based on chain conditions for commutative noetherian rings in \cite{lu1988modules}. A module \( M \) is said to satisfy the \emph{ACCR} condition if, for every submodule \( N \leq M \) and ideal \( I \subseteq R \), the ascending chain
\[
(N :_R I) \subseteq (N :_R I^2) \subseteq \cdots
\]
stabilizes. The study of such chain conditions has been extended in various directions. In a recent paper, Gouaid, Hamed, and Benhissi \cite{gouaid2020endo-noetherian} introduced the property (*) for sequences \( \{a_i\}_{i \in \mathbb{N}} \subseteq R \), requiring that the chain
\[
(I :_R a_1) \subseteq (I :_R a_2) \subseteq \cdots
\]
stabilizes for every ideal \( I \subseteq R \). The property (*) implies the \emph{strongly ACCR*} condition (also called property (C)), originally introduced by Visweswaran in \cite{visweswaran1996some}.

On the dual side, Taherizadeh \cite{taherizadeh2002modules} introduced the DCCR condition: a module \( M \) satisfies \emph{DCCR} if, for every submodule \( N \leq M \) and every finitely generated ideal \( I \subseteq R \), the descending chain
\[
NI \supseteq NI^2 \supseteq \cdots
\]
stabilizes. Recently, Naji, {\"O}zen, and Tekir \cite{naji2022strongly} introduced the \emph{strongly DCCR*} condition: for every submodule \( N \leq M \) and every sequence \( \{ a_i \}_{i \in \mathbb{N}} \subseteq R \), the chain
\[
Na_1 \supseteq Na_1a_2 \supseteq Na_1a_2a_3 \supseteq \cdots
\]
stabilizes. They showed that strongly DCCR* lies strictly between artinian and DCCR* conditions.

As discussed earlier for modules, the notion of \emph{strongly DCCR*} extends naturally to rings: a ring \( R \) is said to satisfy the strongly DCCR* condition if, for every ideal \( I \subseteq R \) and every sequence \( \{a_i\}_{i\in \mathbb{N}} \subseteq R \), the descending chain
\[
Ia_1 \supseteq Ia_1a_2 \supseteq Ia_1a_2a_3 \supseteq \cdots
\]
stabilizes. It is straightforward to show that any ring satisfying this property is endoartinian. However, the converse fails unless \( R \) also satisfies the strongly ACCR* condition and has Krull dimension zero; see Theorem \ref{endoartinian<=>strongly DCCR*<=>strongly ACCR*}.

As an example, for any prime \( p \), the \( \mathbb{Z} \)-module \( M = \prod_{n=1}^{\infty} \mathbb{Z}_{p^n} \) is not semi co-Hopfian \cite{aydogdu2008semi}*{Example 2.11}. By \cite{taherizadeh2010characterization}*{Theorem 1.1}, this implies that \( M \) does not satisfy strongly DCCR*, and hence not DCCR*, despite $\mathbb{Z}$ being commutative.

\begin{theorem} \label{endoartinian<=>strongly DCCR*<=>strongly ACCR*}
Let \( R \) be a commutative ring. The following statements are equivalent:
\begin{enumerate}
    \item \( R \) is endoartinian;
    \item \( R \) satisfies the strongly DCCR* condition;
    \item \( R \) satisfies the strongly ACCR* condition and \( \dim(R) = 0 \).
\end{enumerate}
\end{theorem}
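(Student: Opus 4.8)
The plan is to prove the equivalences by establishing the cycle $(1)\Rightarrow(3)\Rightarrow(2)\Rightarrow(1)$, leaning on the principal-ideal characterization of endoartinianity (Proposition~\ref{right endoartinian principal}) throughout, since over a commutative ring $R$ the left and right versions coincide and endoartinianity just says that every sequence $\{a_i\}$ admits an $n$ with $a_nR=a_{n+1}R$. For the implication $(3)\Rightarrow(2)$, I would take an arbitrary ideal $I$ and sequence $\{a_i\}$ and consider the descending chain $Ia_1\supseteq Ia_1a_2\supseteq\cdots$. I expect this to follow directly from the definition of strongly DCCR* applied with the module $N=I$ (viewing $R$ as a module over itself), so this step should be essentially formal once I note that the ring-theoretic strongly DCCR* condition is exactly the module condition specialized to $N=I\le R_R$.

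The implication $(2)\Rightarrow(1)$ I would handle by specializing: given a sequence $\{a_i\}$, apply strongly DCCR* with $I=R$ to obtain stabilization of $Ra_1\supseteq Ra_1a_2\supseteq\cdots$, i.e. $a_1\cdots a_nR=a_1\cdots a_{n+1}R$ from some point on. The subtlety is that endoartinianity via Proposition~\ref{right endoartinian principal} asks about the chain $a_1R\supseteq a_2R\supseteq\cdots$ for an arbitrary sequence, whereas strongly DCCR* produces products. To bridge this, given any sequence $\{b_i\}$ with $b_{i+1}R\subseteq b_iR$, I would write $b_{i+1}=b_ic_i$ for suitable $c_i$ and then set $a_1=b_1$, $a_{i+1}=c_i$, so that $a_1a_2\cdots a_k=b_k$; applying strongly DCCR* to this product sequence with $I=R$ forces $b_kR$ to stabilize. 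Thus every descending chain of principal ideals of the form $b_{i+1}\in b_iR$ stabilizes, which by Proposition~\ref{endonoe <=>acc prin. ann.; endoart <=>dcc on prin. ide.}(2) is exactly the DCC on principal ideals, giving endoartinianity.

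The hard direction, and the main obstacle, is $(1)\Rightarrow(3)$, which requires extracting both strongly ACCR* and Krull dimension zero from endoartinianity. For $\dim(R)=0$ I would argue that a commutative endoartinian ring is left perfect (Proposition~\ref{endonoe <=>acc prin. ann.; endoart <=>dcc on prin. ide.}(2)), hence semilocal with nil Jacobson radical modulo which it is semisimple, forcing every prime to be maximal and $\dim(R)=0$; I would need to verify that a commutative perfect ring indeed has Krull dimension zero, which follows because $R/\rad(R)$ is a finite product of fields and $\rad(R)$ is nil (T-nilpotent), so prime ideals all contain $\rad(R)$ and correspond to the maximal ideals. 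For strongly ACCR* I would translate the ascending chain $(I:_Ra_1)\subseteq(I:_Ra_2)\subseteq\cdots$ into annihilator or image language: the key is that in a perfect commutative ring every descending chain of principal ideals stabilizes, and I would convert the colon chain into a descending chain of principal (or finitely generated) ideals via multiplication, using zero-dimensionality to control the primary structure.

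The delicate point throughout will be ensuring that the ring-theoretic strongly DCCR* and strongly ACCR* conditions, which are stated for arbitrary ideals $I$ and sequences, are genuinely equivalent to the single-sequence principal-ideal condition characterizing endoartinianity; I anticipate that the reduction to $I=R$ in $(2)\Rightarrow(1)$ and the product-sequence trick are sound, but the converse passage from a principal-ideal chain condition back to the full strongly ACCR* condition with arbitrary $I$ in $(1)\Rightarrow(3)$ is where I expect to invoke $\dim(R)=0$ and the perfect-ring structure most heavily, and where the argument must be written with care to avoid a gap between the principal and general ideal cases.
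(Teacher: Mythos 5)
Your proposal has two genuine gaps, and they occur at exactly the points where the paper's own proof instead invokes the external result \cite{naji2022strongly}*{Corollary~2.8} of Naji, \"{O}zen, and Tekir, which asserts that for a commutative ring the conditions ``perfect'', ``strongly DCCR*'', and ``strongly ACCR* with $\dim(R)=0$'' are all equivalent. The paper's entire proof is the observation that endoartinian $\Leftrightarrow$ DCC on principal ideals $\Leftrightarrow$ perfect (Proposition~\ref{endonoe <=>acc prin. ann.; endoart <=>dcc on prin. ide.}) followed by that citation. Within your from-scratch cycle, the step $(2)\Rightarrow(1)$ is correct and self-contained: the product-sequence trick ($a_1=b_1$, $a_{i+1}=c_i$, so $a_1\cdots a_k=b_k$) genuinely converts an arbitrary descending chain of principal ideals into a strongly DCCR* chain with $I=R$. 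Your derivation of $\dim(R)=0$ in $(1)\Rightarrow(3)$ (perfect, hence nil Jacobson radical with $R/\operatorname{rad}(R)$ a finite product of fields, so all primes are maximal) is also sound.

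The other two legs are not proofs. First, your $(3)\Rightarrow(2)$ is circular as written: you say stabilization of $Ia_1\supseteq Ia_1a_2\supseteq\cdots$ ``follows directly from the definition of strongly DCCR*'', but strongly DCCR* is precisely the conclusion (2) to be derived from hypothesis (3); identifying the ring-level condition with the module-level condition at $N=I\le R_R$ is a tautology, not an implication from strongly ACCR* plus $\dim(R)=0$. Second, the strongly ACCR* half of $(1)\Rightarrow(3)$ is only a statement of intent (``convert the colon chain \dots using zero-dimensionality to control the primary structure''), with no actual argument; note also that you wrote the chain as $(I:_Ra_1)\subseteq(I:_Ra_2)\subseteq\cdots$, which is the paper's property (*), whereas strongly ACCR* uses the product chain $(I:_Ra_1)\subseteq(I:_Ra_1a_2)\subseteq\cdots$. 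Both gaps can be closed without the citation, but only by real work: by Bass's theorem a commutative perfect ring is a finite product of local rings with T-nilpotent maximal ideals, and T-nilpotence forces any chain indexed by products $a_1\cdots a_n$ to stabilize (either infinitely many $a_i$ are non-units, so the products are eventually zero and the colon chain is eventually $R$, or eventually all $a_i$ are units, so the chain is eventually constant); this yields both strongly ACCR* and strongly DCCR* from perfectness. Even then, your cycle still needs a separate argument that strongly ACCR* plus $\dim(R)=0$ implies perfect, which appears nowhere in the proposal. As it stands, you have proved only $(2)\Rightarrow(1)$ and the dimension assertion in $(1)\Rightarrow(3)$.
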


\begin{proof}
(1) \( \Rightarrow \) (2). If \( R \) is endoartinian, then by Proposition \ref{endonoe <=>acc prin. ann.; endoart <=>dcc on prin. ide.}, \( R \) satisfies the descending chain condition on principal ideals, hence is perfect. For perfect commutative rings, \cite{naji2022strongly}*{Corollary 2.8} establishes that strongly DCCR* and strongly ACCR* conditions are equivalent and hold precisely when \( \dim(R) = 0 \).

(2) \( \Rightarrow \) (3). If \( R \) satisfies strongly DCCR*, then by the same result, \( R \) is perfect and has dimension zero, so it satisfies strongly ACCR* as well.

(3) \( \Rightarrow \) (1). If \( R \) satisfies strongly ACCR* and \( \dim(R) = 0 \), then again by \cite{naji2022strongly}*{Corollary 2.8}, \( R \) is perfect. Hence, it satisfies the descending chain condition on principal ideals and is endoartinian by Proposition \ref{endonoe <=>acc prin. ann.; endoart <=>dcc on prin. ide.}.
\end{proof}

As an immediate application of Theorem \ref{endoartinian<=>strongly DCCR*<=>strongly ACCR*} and \cite{naji2022strongly}*{Corollary~2.8}, we obtain the following module-theoretic consequence.

\begin{corollary}
Let $R$ be a commutative endoartinian ring and $M$ be an $R$–module. Then every factor module $M/N$ is principally cogenerated.
\end{corollary}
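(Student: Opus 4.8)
The plan is to treat the statement as a translation of the ring-theoretic equivalence just established into the module-theoretic language of Naji, {\"O}zen, and Tekir. Since $R$ is commutative and endoartinian, Theorem~\ref{endoartinian<=>strongly DCCR*<=>strongly ACCR*} supplies three facts at once: $R$ satisfies the strongly DCCR* condition, $\dim(R)=0$, and (via Proposition~\ref{endonoe <=>acc prin. ann.; endoart <=>dcc on prin. ide.}) $R$ is perfect, i.e.\ satisfies the descending chain condition on principal ideals. These are exactly the hypotheses under which \cite{naji2022strongly}*{Corollary~2.8} operates, so the task reduces to feeding them into that result.

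The substantive step is to promote the condition from the ideal lattice of $R$ to an arbitrary module. I would show that perfectness alone forces every $R$-module to satisfy strongly DCCR*. Fix a submodule $N\le M$ and a sequence $\{a_i\}\subseteq R$. The principal ideals $a_1R\supseteq a_1a_2R\supseteq\cdots$ form a descending chain, which stabilizes because $R$ is perfect; say $a_1\cdots a_nR=a_1\cdots a_mR$ for all $m\ge n$. For each such $m$ write $a_1\cdots a_n=a_1\cdots a_m\,b$ with $b\in R$. Since $Na_1\cdots a_m\subseteq Na_1\cdots a_n$ holds automatically and $Na_1\cdots a_n=Na_1\cdots a_m\,b\subseteq Na_1\cdots a_m$, we obtain $Na_1\cdots a_n=Na_1\cdots a_m$. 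Hence the chain $Na_1\supseteq Na_1a_2\supseteq\cdots$ stabilizes, and every $R$-module satisfies strongly DCCR*.

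Finally I would invoke \cite{naji2022strongly}*{Corollary~2.8}, whose content in the present regime is the equivalence between a module satisfying strongly DCCR* and all of its factor modules being principally cogenerated. Since the previous step shows $M$ itself satisfies strongly DCCR*, every factor module $M/N$ is principally cogenerated, which is the assertion.

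The main obstacle I anticipate is not the invocation of the cited corollary but the middle paragraph: verifying that a condition phrased purely in terms of the ideals of $R$ genuinely lifts to all $R$-modules. The reduction of the module chain $Na_1\supseteq Na_1a_2\supseteq\cdots$ to the principal-ideal chain $a_1\cdots a_iR$ is the crux, and it is here that perfectness (equivalently, endoartinianity) rather than $\dim(R)=0$ does the real work.
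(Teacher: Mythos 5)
Your middle paragraph is correct, and it is in fact a nice self-contained argument: the paper simply cites \cite{naji2022strongly}*{Corollary~2.8} (together with perfectness from Proposition~\ref{endonoe <=>acc prin. ann.; endoart <=>dcc on prin. ide.}) for the statement that every module over a commutative perfect ring satisfies strongly DCCR*, whereas you derive it directly from the DCC on principal ideals via the factorization $a_1\cdots a_n=a_1\cdots a_m\,b$. The problem is your final paragraph, where you attribute to \cite{naji2022strongly}*{Corollary~2.8} a \emph{module-level} equivalence: ``$M$ satisfies strongly DCCR* if and only if every factor module of $M$ is principally cogenerated.'' Nothing in the paper supports that reading. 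Everywhere Corollary~2.8 is invoked (including in Theorem~\ref{endoartinian<=>strongly DCCR*<=>strongly ACCR*}), it functions as a \emph{ring-level} equivalence for commutative rings (perfect $\Leftrightarrow$ strongly DCCR* $\Leftrightarrow$ strongly ACCR* with $\dim(R)=0$, together with the statement that all modules over such a ring are strongly DCCR*), and in the present corollary the paper uses only its part~(5), and only to guarantee that certain annihilators may be taken nonzero. The passage from ``$M$ is strongly DCCR*'' to ``every coset of $M/N$ is annihilated by a principal ideal'' is precisely the step the paper proves by hand, and your proposal contains no argument for it.

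For the record, here is the missing argument, which is the actual core of the paper's proof. Fix $N\le M$ and $m\in M$, and set $N':=N+Rm$. Apply strongly DCCR* to $N'$ with the constant sequence $a,a,a,\dots$ for an arbitrary $a\in R$: the chain $N'a\supseteq N'a^2\supseteq\cdots$ stabilizes, so $N'a^t=N'a^{t+1}$ for some $t\ge 1$. Then $ma^t\in N'a^{t+1}$, so $ma^t=na^{t+1}+mba^{t+1}$ with $n\in N$ and $b\in R$, giving $m\,a^t(1-ba)=na^{t+1}\in N$. Thus the coset $m+N$ is annihilated by the principal ideal $rR$ with $r=a^t(1-ba)$, and \cite{naji2022strongly}*{Corollary~2.8(5)} is invoked only at this point, to see that such annihilators can be taken nonzero, which is what principal cogeneration requires. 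Note also that your self-diagnosis is inverted: the step you flagged as the main obstacle (lifting the ideal-theoretic DCC to module chains) is the easy part, and you handled it correctly; the step you dismissed as a mere invocation of the cited corollary is where the real content lies.
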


\begin{proof}
Since $R$ is endoartinian, it is perfect by Proposition~\ref{endonoe <=>acc prin. ann.; endoart <=>dcc on prin. ide.} and \cite{naji2022strongly}*{Corollary~2.8} imply that every $R$-module is strongly DCCR$^*$. In particular, $M$ is strongly DCCR$^*$. 

Fix $N\le M$ and $m\in M$, and set $N':=N+Rm$. Apply the strongly DCCR$^*$ property to the constant sequence $a_1=a_2=\cdots=a\in R$. The descending chain
\[
N'a\supseteq N'a^2\supseteq\cdots
\]
stabilizes, so there exists $t\ge1$ with $N'a^t=N'a^{t+1}$. Hence $m a^t\in N' a^{t+1}$, so we may write
\[
m a^t = n a^{t+1} + m b a^{t+1}
\]
for some $n\in N$ and $b\in R$. Rearranging and factoring gives
\[
m a^t(1-ba) = n a^{t+1}\in N.
\]
Put $r:=a^t(1-ba)\in R$. Then $r m\in N$, so the coset $m+N$ is annihilated by the principal ideal $rR$. As $m\in M$ was arbitrary, every coset of $M/N$ has a principal annihilator, i.e.\ $M/N$ is principally annihilated. By \cite{naji2022strongly}*{Corollary~2.8(5)}, these annihilators can be taken nonzero, so $M/N$ is principally cogenerated.
\end{proof}

The next result is trivial but included for the sake of completion.

\begin{proposition} \label{locali}
    Let $T\subseteq R$ be a multiplicative subset consisting entirely of units. Then, for any right $R$-module $M$, the localization $MT^{-1}:=M\otimes_R RT^{-1}$ is canonically isomorphic to $M$. In particular, $M$ is endoartinian if and only if $MT^{-1}$ is endoartinian.
\end{proposition}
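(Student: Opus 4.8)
The plan is to establish the statement at the level of rings first and then transport everything through the tensor product; the entire content of the proposition is the observation that formally inverting elements that are already invertible changes nothing, so once I know $RT^{-1}\cong R$ canonically, the rest is purely formal. As the paper itself remarks, this is a routine statement, so the work consists mainly in spelling out the right universal-property argument.

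First I would show that the canonical ring homomorphism $\iota\colon R\to RT^{-1}$, $r\mapsto r/1$, is an isomorphism. Since every $t\in T$ is already a unit of $R$, the identity map $\mathrm{id}_R\colon R\to R$ sends each element of $T$ to a unit of $R$. By the universal property of the localization $RT^{-1}$ (the initial $R$-ring in which the image of $T$ becomes invertible), there is a unique ring homomorphism $\psi\colon RT^{-1}\to R$ with $\psi\circ\iota=\mathrm{id}_R$; concretely $\psi(r/t)=r\,t^{-1}$, using the inverse of $t$ inside $R$. The composite $\iota\circ\psi$ is then an $R$-ring endomorphism of $RT^{-1}$ satisfying $(\iota\circ\psi)\circ\iota=\iota$, so by the uniqueness clause of the universal property it must equal the identity, giving $\iota\circ\psi=\mathrm{id}_{RT^{-1}}$. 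Hence $\iota$ is a ring isomorphism. Nothing here uses commutativity: the same argument applies to the universal (Cohn) localization of an associative ring at a set of units, so the statement is safe for the general rings considered in the paper.

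Second, tensoring with $M$ yields the canonical right $R$-module isomorphism
\[
MT^{-1}=M\otimes_R RT^{-1}\;\xrightarrow{\ \mathrm{id}_M\otimes\iota^{-1}\ }\;M\otimes_R R\;\cong\;M,
\]
which is precisely the map $m\otimes(r/t)\mapsto m\,(r\,t^{-1})$, with inverse $m\mapsto m\otimes 1$. Write $\Phi\colon M\xrightarrow{\sim}MT^{-1}$ for this isomorphism. For the ``in particular'' clause I would then invoke that endoartinianity is an isomorphism invariant: $\Phi$ induces a ring isomorphism $\morp_R(M)\to\morp_R(MT^{-1})$, $f\mapsto\Phi f\Phi^{-1}$, and one checks directly that $\im(\Phi f\Phi^{-1})=\Phi(\im f)$. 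Consequently any descending chain of endoimages in $MT^{-1}$ is the $\Phi$-image of a descending chain of endoimages in $M$ and vice versa, and the two chains stabilize simultaneously. Therefore $M$ satisfies the descending chain condition on endoimages if and only if $MT^{-1}$ does, i.e.\ $M$ is endoartinian if and only if $MT^{-1}$ is.

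There is no genuine obstacle in the argument. The single point meriting a line of care is the claim that $\iota$ is an \emph{isomorphism} rather than merely the canonical localization map: this is exactly where one must use that the members of $T$ already possess inverses inside $R$, and it is handled cleanly by the uniqueness part of the universal property as above.
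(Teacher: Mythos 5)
Your proof is correct and takes essentially the same approach as the paper: establish that $RT^{-1}\cong R$ canonically (since $T$ consists of units), deduce $MT^{-1}\cong M$ via the tensor product, and then transport descending chains of endoimages through the induced isomorphism of endomorphism rings. The only difference is that you spell out the universal-property argument and the correspondence $\im(\Phi f\Phi^{-1})=\Phi(\im f)$ explicitly, which the paper leaves implicit.
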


\begin{proof}
    Since every $t\in T$ is a unit, the localization ring $RT^{-1}$ is canonically isomorphic to $R$. The canonical map $\phi: M \to MT^{-1}$, $\phi(m)=m\otimes 1$, is therefore an isomorphism with inverse induced by $R\cong RT^{-1}$. Consequently, $\morp_R(M)\cong \morp_{RT^{-1}}(MT^{-1})$, and the descending chains of endoimages correspond exactly, so $M$ is endoartinian if and only if $MT^{-1}$ is.
\end{proof}

The preservation of endoartinianity under localization can fail if the multiplicative set contains nonunits, as shown below.

\begin{example}  \label{counter_localization}
    Let $M=\bigoplus_{n\geq 1}\mathbb{Z}/p\mathbb{Z}$ as a $\mathbb{Z}$-module, and define idempotent endomorphisms  $\pi_k((x_1,x_2,\dots))=(0,\dots,0,x_k,x_{k+1},\dots)$. Then $\im(\pi_1)\supsetneq\im(\pi_2)\supsetneq\cdots$ is a strictly descending chain of endoimages, so $M$ is not endoartinian. 
    
    Now localize at $T=\{1,p,p^2,\dots\}$. Since $p$ acts as zero on each $\mathbb{Z}/p\mathbb{Z}$ summand, we have $MT^{-1} = M\otimes_{\mathbb{Z}} \mathbb{Z}[1/p] = 0$, which is trivially endoartinian. \qed
\end{example}

The classical Hopkins–Levitzki theorem asserts that artinian rings are also noetherian. In the context of endo-theoretic chain conditions, a natural question arises: under what circumstances does endoartinianity imply endonoetherianity? The previous results—Corollary \ref{subrings}, and Proposition \ref{locali}, show that endoartinianity behaves well under factor rings and localizations. However, this is not sufficient to guarantee endonoetherianity, even for commutative rings. The following example illustrates the failure of such an implication and motivates the introduction of additional structure under which the endoartinian and endonoetherian properties become equivalent.

\begin{example} \label{endoart ring not endonoe}
Let \( R = \Bbbk [x_1, x_2, \dots]/(x_i^2,\, x_i x_j \mid i,j \in \mathbb{N}) \), where \( \Bbbk \) is a field. In this ring, all products of generators vanish beyond degree one, so \( J(R) = (x_1, x_2, \dots) \) is the Jacobson radical, and \( R \) is local with \( J(R)^2 = 0 \). Hence, \( R \) is perfect, and thus endoartinian by Proposition \ref{endonoe <=>acc prin. ann.; endoart <=>dcc on prin. ide.}.

However, \( R \) is not endonoetherian. Define elements \( r_i := 1 + x_i \) and \( s_i := r_1 r_2 \cdots r_i \) for \( i \geq 1 \). It can be shown inductively that
\[
\operatorname{ann}_R(s_n) = (x_{n+1}, x_{n+2}, \dots),
\]
yielding a strictly ascending chain of principal right annihilators:
\[
\operatorname{ann}_R(s_1) \subset \operatorname{ann}_R(s_2) \subset \cdots.
\]
Therefore, \( R \) fails the ascending chain condition on principal right annihilators and is not endonoetherian. \qed
\end{example}

The obstruction here lies in the failure of control over annihilators of principal elements. This is rectified in \emph{right principally injective} rings, where every homomorphism from a principal right ideal extends to a ring by a left multiplication of an element of the ring \cite{nicholson1995principally}. In such settings, descending and ascending chain conditions become symmetric, enabling a Hopkins–Levitzki-type equivalence in the endo-theoretic setting.

\begin{theorem}[Hopkins–Levitzki for Endo-Theory] \label{endoartinian <=> endonoetherian}
Let \( R \) be a right principally injective ring. Then \( R \) is right endoartinian if and only if it is right endonoetherian.
\end{theorem}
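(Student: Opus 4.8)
The plan is to translate both finiteness conditions into one-sided perfectness and then use principal injectivity to bridge the two sides. First I would record the two characterizations already in hand. By Proposition~\ref{endonoe <=>acc prin. ann.; endoart <=>dcc on prin. ide.}(2), $R$ is right endoartinian precisely when it satisfies the descending chain condition on principal right ideals, equivalently when $R$ is left perfect; and by part~(1) of the same proposition, $R$ is right endonoetherian precisely when it satisfies the ascending chain condition on the principal right annihilators $\rann(a)$, $a\in R$ (these being the endokernels $\ker\rho_a$ for left multiplications). The two conditions therefore sit on opposite sides---principal right \emph{ideals} versus principal right \emph{annihilators}---and the role of right principal injectivity is exactly to interchange them.

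The bridge is the defining identity of a right principally injective ring: every $R$-map $aR\to R_R$ is a left multiplication, which is equivalent to $\ell(\rann(a))=Ra$ for all $a\in R$, where $\ell(\cdot)$ denotes the left annihilator. A one-line well-definedness computation---a map $aR\to R$ with $a\mapsto c$ exists iff $\rann(a)\subseteq\rann(c)$, and is a left multiplication iff $c\in Ra$---yields the order relation $\rann(a)\subseteq\rann(b)\iff Rb\subseteq Ra$. Hence $a\mapsto\rann(a)$ induces an order-reversing bijection between the principal left ideals $Ra$ and the principal right annihilators $\rann(a)$, so an ascending chain of the latter corresponds to a descending chain of the former. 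Consequently right endonoetherianity is equivalent to the descending chain condition on principal left ideals, i.e.\ (by the left--right dual of Proposition~\ref{endonoe <=>acc prin. ann.; endoart <=>dcc on prin. ide.}) to $R$ being right perfect. Combined with the first paragraph, the whole theorem reduces to the single assertion that, for a right principally injective ring, left perfectness and right perfectness coincide.

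Establishing this last equivalence is the step I expect to be the main obstacle. Note that the two needed implications, ``left perfect $\Rightarrow$ right perfect'' and ``right perfect $\Rightarrow$ left perfect,'' are \emph{not} related by symmetry, since the injectivity hypothesis is only on the right; both must be argued, with principal injectivity feeding into each. Either hypothesis already makes $R$ semiperfect, so $R/J$ is semisimple and (applying the available T-nilpotency to constant sequences) $J$ is nil. Working modulo $J$ stabilizes any chain of principal left ideals in the semisimple quotient after finitely many steps; the difficulty is to lift this stabilization to $R$ itself, which requires converting the one-sided T-nilpotency of $J$ supplied by left perfectness into the opposite-sided form needed for the left-ideal chain, and it is precisely the identity $\ell(\rann(a))=Ra$ that must effect this conversion. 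That right principal injectivity is genuinely indispensable here is witnessed by Example~\ref{endoart ring not endonoe}: a commutative local (hence two-sided) perfect ring with $J^2=0$ that is endoartinian yet fails the ascending chain condition on principal right annihilators, and so cannot be principally injective. One may alternatively package the crux by appealing to the structure theory of perfect principally injective rings, which forces such a ring to be quasi-Frobenius and in particular two-sided perfect, closing both implications at once.
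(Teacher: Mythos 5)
Your first two paragraphs are correct, and they are essentially the paper's own opening move: Proposition~\ref{endonoe <=>acc prin. ann.; endoart <=>dcc on prin. ide.} together with Nicholson--Yousif's characterization $\operatorname{l.ann}(\rann(a))=Ra$ yields the order-reversing correspondence $\rann(a)\subseteq\rann(b)\iff Rb\subseteq Ra$, hence right endonoetherianity $\iff$ DCC on principal \emph{left} ideals ($R$ right perfect), while right endoartinianity $\iff$ DCC on principal \emph{right} ideals ($R$ left perfect). But the proposal then stalls at exactly the point you yourself flag as the crux: you never prove that a right principally injective ring is left perfect if and only if it is right perfect. The paragraph about working modulo $J$ and ``converting'' one-sided T-nilpotency describes what would have to be done rather than doing it; and the fallback appeal --- that the structure theory of perfect principally injective rings forces such a ring to be quasi-Frobenius --- is not an available result. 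Even under the far stronger hypothesis that $R$ is right \emph{self}-injective, the claim that a perfect (or semiprimary) such ring is quasi-Frobenius is precisely the Faith conjecture, a well-known open problem; a fortiori nothing of the sort can be cited for principal injectivity. So as written, your proposal is a correct reduction followed by an unproved, and possibly very hard, claim; it does not establish the theorem.

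For comparison, you should know that the paper's own proof is no more complete at this point: it asserts that the Ikeda--Nakayama identity ``translates precisely,'' via Proposition~\ref{endonoe <=>acc prin. ann.; endoart <=>dcc on prin. ide.}, into the desired equivalence, thereby eliding exactly the left/right mismatch you isolated --- the translation identifies the ascending chain condition on endokernels $\rann(a)$ with the DCC on principal left ideals $Ra$ (right perfectness), whereas the endoimages are the principal right ideals $aR$ (left perfectness), and no bridge between the two sides is supplied. Your analysis is therefore more careful than the published argument and correctly locates a genuine gap in it (your use of Example~\ref{endoart ring not endonoe} to show that two-sided perfectness alone cannot supply the missing ACC is also sound). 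But locating the gap is not closing it: a complete proof would have to establish the left--right perfectness transfer for right principally injective rings, or else restrict the theorem (for instance to commutative rings, where the two perfectness conditions trivially coincide and your correspondence finishes the argument).
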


\begin{proof}
By \cite{nicholson1995principally}*{Lemma 1.1}, a ring is right principally injective ring if and only if it satisfies the Ikeda-Nakayama condition, i.e., $\operatorname{l.ann}(\operatorname{r.ann}(a))=Ra$ for all $a\in R$. By Proposition \ref{endonoe <=>acc prin. ann.; endoart <=>dcc on prin. ide.}, this equivalence translates precisely to the equivalence between endoartinianity and endonoetherianity.
\end{proof}

\begin{note}
The failure in Example \ref{endoart ring not endonoe} stems from the fact that \( R \) is not principally injective. This can be seen explicitly as follows. Let \( a := x_1 \in R \), and define a homomorphism \( f \colon (a) \to R \) by \( f(x_1) := x_2 \). Suppose, for contradiction, that \( f \) extends to an \( R \)-module homomorphism \( \overline{f} \colon R \to R \), so that the following diagram commutes:

\[
\begin{tikzcd}
0 \arrow[r] & (x_1) \arrow[r, hook, "i"] \arrow[dr, "f"'] & R \arrow[dashed]{d}{\overline{f}} \\
& & R
\end{tikzcd}
\]

Then \( \overline{f}(x_1) = \overline{f}(1 \cdot x_1) = \overline{f}(1) \cdot x_1 = c x_1 \) for some \( c \in R \), while by construction, \( \overline{f}(x_1) = x_2 \). Thus, we would require \( c x_1 = x_2 \), which is impossible in this ring: since all products \( x_i x_j \) vanish, no such \( c \in R \) exists. Hence, the map \( f \) cannot be extended, and \( R \) is not principally injective. \qed
\end{note}

While the next results are elementary, we include them for completeness. In particular, the following theorem identifies semisimplicity as a structural setting in which endo-chain module conditions coincide with classical ones.

\begin{theorem}
Let \( M \) be a semisimple module. The following statements are equivalent:
\begin{enumerate}
    \item \( M \) is endonoetherian (respectively, noetherian);
    \item \( M \) is endoartinian (respectively, artinian);
    \item \( M \) has finite length.
\end{enumerate}
\end{theorem}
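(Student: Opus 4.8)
The plan is to prove the two nontrivial implications by showing that \emph{infinite} length obstructs both endo-conditions, and to obtain the easy direction directly from the classical chain-condition implications recorded in the introduction. First I would dispose of (3)\,$\Rightarrow$\,(1) and (3)\,$\Rightarrow$\,(2): if $M$ has finite length then $M$ is simultaneously noetherian and artinian by the standard theory, and the left-hand arrows of the introductory diagram give noetherian\,$\Rightarrow$\,endonoetherian and artinian\,$\Rightarrow$\,endoartinian. Hence both (1) and (2), in their endo- \emph{and} classical readings, follow at once.

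For the converses I would argue by contraposition, establishing the single statement: if the semisimple module $M$ has infinite length, then $M$ is neither endoartinian nor endonoetherian. Since noetherian\,$\Rightarrow$\,endonoetherian and artinian\,$\Rightarrow$\,endoartinian, failing the endo-conditions automatically fails the classical ones, so this one construction settles (1)\,$\Rightarrow$\,(3) and (2)\,$\Rightarrow$\,(3) in both readings simultaneously. Write $M=\bigoplus_{i\in I}S_i$ with each $S_i$ simple; infinite length means $I$ is infinite, so fix distinct indices $i_1,i_2,\dots\in I$. For each subset $J\subseteq I$ let $p_J\in\morp_R(M)$ be the coordinate projection acting as the identity on $S_i$ for $i\in J$ and as $0$ for $i\notin J$; this is a well-defined $R$-linear endomorphism, since it plainly commutes with the right $R$-action on the direct sum.

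To defeat endoartinianity I would set $f_k:=p_{J_k}$ with $J_k:=I\setminus\{i_1,\dots,i_{k-1}\}$, so that $\im(f_k)=\bigoplus_{i\in J_k}S_i$ and the chain $\im(f_1)\supsetneq\im(f_2)\supsetneq\cdots$ is strictly descending, each step deleting the nonzero summand $S_{i_{k}}$. To defeat endonoetherianity I would set $g_k:=p_{I\setminus\{i_1,\dots,i_k\}}$, so that $\ker(g_k)=\bigoplus_{n=1}^{k}S_{i_n}$ and the chain $\ker(g_1)\subsetneq\ker(g_2)\subsetneq\cdots$ is strictly ascending. Neither chain stabilizes, which completes the contrapositive and hence the equivalences.

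The argument is essentially a bookkeeping exercise, and I expect no serious obstacle. The one conceptual point requiring care is the uniform observation that the \emph{weaker} endo-conditions already fail: this frees us from a separate treatment of the classical statements and, crucially, from any case analysis on the isomorphism types of the $S_i$ — the coordinate projections work regardless of whether some simple type is repeated or all types are distinct. The remaining (mild) obstacle is merely to verify that the displayed image and kernel chains are strict and that each $p_J$ is a genuine endomorphism of the possibly infinite direct sum; once one recognizes that the index set $I$ is infinite exactly when the length is infinite, everything follows.
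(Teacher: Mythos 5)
Your proposal is correct and takes essentially the same approach as the paper: the easy direction follows because endoimages and endokernels are submodules, and the converse comes down to chains of direct summands indexed by the simple components. Your explicit coordinate-projection chains $\im(f_1)\supsetneq\im(f_2)\supsetneq\cdots$ and $\ker(g_1)\subsetneq\ker(g_2)\subsetneq\cdots$ simply spell out the non-stabilizing chains that the paper's terse claim (``if such a chain terminates, the index set $I$ must be finite'') leaves implicit.
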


\begin{proof}

(3) \( \Rightarrow \) (1) and (2). If \( M \) has finite length, then it satisfies both the ascending and descending chain conditions on submodules. Since all submodules are direct summands (being semisimple), these chain conditions also hold for endokernels, and endoimages. Thus, \( M \) is endonoetherian and endoartinian.

(1) or (2) \( \Rightarrow \) (3). In a semisimple module, every submodule is a direct summand. Therefore, any chain of endokernels or endoimages corresponds to a chain of direct summands. If such a chain terminates, the index set \( I \) must be finite. Hence, \( M \) has finite length.
\end{proof}

Since over a semisimple ring every module is semisimple, the result above allows us to extend the equivalences established in \cite{hmaimou2007generalized}*{Corollary 3.11}. Specifically, two additional chain conditions—endoartinian, and endonoetherian, become equivalent to the previously known eleven conditions.

\begin{theorem} \label{15 cond.}
Let \( R \) be a semisimple ring and \( M \) be an \( R \)-module. Then the following conditions are equivalent:
\begin{enumerate}
    \item \( M \) is noetherian;
    \item \( M \) is strongly Hopfian;
    \item \( M \) is Hopfian;
    \item \( M \) is generalized Hopfian;
    \item Every homogeneous component of \( M \) is finitely generated;
    \item \( M \) has finite length;
    \item \( M \) is artinian;
    \item \( M \) is strongly co-Hopfian;
    \item \( M \) is co-Hopfian;
    \item \( M \) is weakly co-Hopfian;
    \item \( M \) is Dedekind-finite (i.e., \( \morp_R(M) \) is a Dedekind-finite ring);
    \item \( M \) is endonoetherian;
    \item \( M \) is endoartinian.
\end{enumerate}
\end{theorem}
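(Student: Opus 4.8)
The plan is to reduce everything to the preceding theorem by exploiting the defining feature of semisimple rings. Since $R$ is semisimple, every $R$-module is semisimple; in particular the given module $M$ is semisimple, so the preceding theorem applies to $M$ verbatim. That theorem supplies exactly the two new equivalences we need: for a semisimple module, being endonoetherian, being endoartinian, and having finite length are all equivalent. Consequently conditions (12) and (13) are each equivalent to condition (6).

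Next I would invoke \cite{hmaimou2007generalized}*{Corollary 3.11}, which records the mutual equivalence of the eleven classical and Fitting-type conditions (1)--(11) for modules over a semisimple ring. The key observation is that finite length appears on that list as condition (6); it therefore serves as the pivot that connects the previously established equivalences to the new endo-chain conditions.

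Putting these two inputs together finishes the argument. By \cite{hmaimou2007generalized}*{Corollary 3.11}, condition (6) is equivalent to each of (1)--(5) and (7)--(11); by the preceding theorem, condition (6) is equivalent to each of (12) and (13). Transitivity of logical equivalence then yields the mutual equivalence of all thirteen conditions.

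I do not expect a genuine obstacle here, since the statement is assembled entirely from results already in hand. The only point requiring care is the initial reduction: one must note that the semisimplicity of $R$ forces \emph{every} $R$-module to be semisimple, and it is precisely this fact that licenses applying the preceding theorem to an arbitrary $M$, thereby bridging the finite-length condition (6) to the endo-chain conditions (12) and (13).
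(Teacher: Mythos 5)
Your proposal is correct and follows essentially the same route as the paper: cite \cite{hmaimou2007generalized}*{Corollary 3.11} for the equivalence of (1)--(11), and use the fact that every module over a semisimple ring is semisimple to bridge (12) and (13) to the finite-length condition (6) via the preceding theorem. The only cosmetic difference is that the paper re-derives the content of that preceding theorem inline (endoimages and endokernels are direct summands, so endo-chain conditions reduce to submodule chain conditions), whereas you invoke it directly, which is exactly what the paper's own lead-in paragraph suggests.
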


\begin{proof}
Over a semisimple ring, every module is semisimple: it decomposes as a (possibly infinite) direct sum of simple modules, and every submodule is a direct summand. Under these assumptions, the equivalence of conditions (1)–(11) is established in \cite{hmaimou2007generalized}*{Corollary 3.11}.

Now, in the semisimple context, any submodule (and hence any endoimage or endokernel) is a direct summand. Therefore, the ascending and descending chain conditions on submodules coincide with those on endoimages and endokernels. Thus, conditions (12)–(13) are equivalent to condition (6), which asserts that \( M \) has finite length. The equivalence of all thirteen conditions follows.
\end{proof}

\begin{remark}
    The result is generally not true for isoartinian and isonoetherian modules. Facchini and Nazemian gave an example that $\mathcal{L}(M)/\sim$ can be infinite even for a finite length module $M$ where $\mathcal{L}(M)$ is the lattice of submodules of modular lattice with 0 and 1, and $\sim$ smallest congruence in $\mathcal{L}(M)$ \cite{facchini2016modules}*{Example 6.11}. \qed
\end{remark}

We have seen that, over semisimple rings, many finiteness and rigidity conditions, such as Hopfian, co-Hopfian, and Dedekind-finite properties, coincide with classical chain conditions. One additional structural property of interest in this context is the \emph{Schr\"{o}der–Bernstein property} (or SB property), introduced by Dehghani, Ebrahim, and Rizvi in \cite{dehghani2019schroder}. 

A module \( M \) is said to satisfy the SB property if whenever two direct summands \( A \) and \( B \) of \( M \) are mutually subisomorphic (i.e., there exist embeddings \( A \hookrightarrow B \) and \( B \hookrightarrow A \)), then \( A \cong B \). This condition captures the absence of infinitely nested isomorphic substructures and lies conceptually close to various chain and rigidity conditions.

The following result shows that endoartinian modules automatically satisfy the SB property.

\begin{theorem} \label{endo SB}
Let \( M \) be an endoartinian module. Then \( M \) satisfies the SB property.
\end{theorem}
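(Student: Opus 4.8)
The plan is to reduce the SB property to the co-Hopfian property of direct summands, which endoartinianity supplies for free. Suppose \( A, B \leq^{\oplus} M \) are mutually subisomorphic, witnessed by embeddings \( \alpha \colon A \hookrightarrow B \) and \( \beta \colon B \hookrightarrow A \). The first step is to pass from \( M \) down to \( A \): by Proposition \ref{submodule etc.}(1), every direct summand of an endoartinian module is again endoartinian, so \( A \) itself is endoartinian. This localizes the whole problem inside \( A \), and I would note that \( B \) being a summand is not even needed for the argument.

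Next I would form the composite \( g := \beta \circ \alpha \in \morp_R(A) \). As a composition of two injections it is injective. The key step is to show that an injective endomorphism of an endoartinian module is automatically surjective—that is, endoartinian modules are co-Hopfian. For this, consider the descending chain of endoimages
\[
\im(g) \supseteq \im(g^2) \supseteq \im(g^3) \supseteq \cdots,
\]
where each \( g^k \in \morp_R(A) \). Endoartinianity forces this chain to stabilize, say \( \im(g^n) = \im(g^{n+1}) \). Then for any \( x \in A \), the element \( g^n(x) \) lies in \( \im(g^{n+1}) \), so \( g^n(x) = g^{n+1}(y) \) for some \( y \in A \); since \( g^n \) is injective this yields \( x = g(y) \), whence \( g \) is surjective. (This is the module-level manifestation of the inclusion of endoartinian modules in the strongly co-Hopfian class recorded in the introduction.)

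Finally, I would read off the isomorphism from the surjectivity of \( g \). Because \( \im(g) = \beta(\alpha(A)) \subseteq \beta(B) \subseteq A \) and \( g \) is onto, we obtain \( \beta(B) = A \), so \( \beta \) is surjective; being already injective, \( \beta \colon B \to A \) is an isomorphism, and therefore \( A \cong B \).

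I do not expect a serious obstacle here: the real content is that endoartinianity degrades to co-Hopfianity on summands, after which the Schröder–Bernstein conclusion is immediate—the usual difficulty of the module Schröder–Bernstein problem (manufacturing an isomorphism out of two one-sided injections) evaporates because the self-injection \( g \) is already onto. The only two points requiring care are the invocation of Proposition \ref{submodule etc.}(1) to guarantee that \( A \) inherits endoartinianity from \( M \), and the observation that the powers \( g^k \) constitute an admissible sequence of endomorphisms, so that the defining descending-chain condition genuinely applies to \( \{\im(g^k)\} \).
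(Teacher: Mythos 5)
Your proof is correct, but it takes a genuinely different route from the paper. The paper's proof is a two-step citation: by Proposition~\ref{endoart.noe.<=>genefitting}, an endoartinian module satisfies DCC on direct summands, and then \cite{dehghani2019schroder}*{Theorem 2.14} is invoked, which says that DCC on direct summands already implies the SB property. You instead argue directly: you pass to the summand \(A\) via Proposition~\ref{submodule etc.}(1), show that the injective endomorphism \(g=\beta\circ\alpha\) of the endoartinian module \(A\) must be surjective (stabilization of \(\im(g)\supseteq\im(g^2)\supseteq\cdots\) plus injectivity of \(g^n\)), and conclude that \(\beta\) is onto, hence an isomorphism. Each step checks out: the powers \(g^k\) do form an admissible descending chain of endoimages, and the implication \(\im(g^n)=\im(g^{n+1})\Rightarrow g\) surjective is the standard co-Hopfian argument. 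What your approach buys is self-containedness—no appeal to the external Dehghani--Ebrahim--Rizvi theorem—and in fact a slightly stronger conclusion, since as you note only \(A\) needs to be a direct summand of \(M\); \(B\) can be an arbitrary module mutually subisomorphic with \(A\). It also makes explicit the inclusion of endoartinian modules in the (strongly) co-Hopfian class that the paper's introduction records without proof. What the paper's route buys is brevity and a tighter connection to the summand-chain machinery (Proposition~\ref{endoart.noe.<=>genefitting}) that is reused elsewhere in Section~\ref{sec2}.
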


\begin{proof}
By Proposition \ref{endoart.noe.<=>genefitting}, every endoartinian module satisfies the descending chain condition (DCC) on direct summands. On the other hand, \cite{dehghani2019schroder}*{Theorem 2.14} establishes that any module satisfying DCC on direct summands satisfies the SB property. Therefore, \( M \) also satisfies the SB property.
\end{proof}

In a related development, Dehghani and Rizvi introduced the \emph{dual Schr\"{o}der–Bernstein property} (or, DSB property) in \cite{dehghani2021mutually}. A module \( M \) is said to satisfy the DSB property if every pair of mutually epimorphic modules (i.e., modules \( M_1 \) and \( M_2 \) with epimorphisms \( M_1 \twoheadrightarrow M_2 \) and \( M_2 \twoheadrightarrow M_1 \)) are necessarily isomorphic.

In \cite{dehghani2021mutually}*{Lemma 2.2}, it was shown that the DSB property implies the SB property, but the converse does not hold in general. Given that endoartinian modules satisfy the SB property (\Cref{endo SB}), it is natural to ask whether they also satisfy the DSB property. However, the class of injective modules already provides a counterexample, as shown in \cite{dehghani2021mutually}*{Example 2.3}, demonstrating that satisfying the SB property does not guarantee the DSB property.

\begin{proposition}
    Let $M$ be a right $R$-module. If $M$ is semisimple endoartinian, then $M$ satisfies DSB property.
\end{proposition}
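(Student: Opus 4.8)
The plan is to collapse the whole question to a finite-length statement. Since $M$ is semisimple, I would first invoke the characterization established earlier in this section for semisimple modules, namely that a semisimple module is endoartinian if and only if it has finite length. The hypothesis that $M$ is semisimple and endoartinian therefore yields at once that $M$ has finite composition length. This is the one place where the endoartinian hypothesis is actually used, and it is the conceptual heart of the argument; everything afterwards is classical length bookkeeping.

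Next I would unwind the DSB property for $M$. It concerns direct summands $M_1, M_2 \leq^{\oplus} M$ that are mutually epimorphic, i.e.\ admit surjections $M_1 \twoheadrightarrow M_2$ and $M_2 \twoheadrightarrow M_1$; the goal is to conclude $M_1 \cong M_2$. Since a direct summand of a finite-length module is again of finite length (being a submodule), both $M_1$ and $M_2$ have finite length, say $\ell_1$ and $\ell_2$ respectively.

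The core step is then the length count. For any epimorphism $f \colon M_1 \twoheadrightarrow M_2$, the short exact sequence $0 \to \ker f \to M_1 \to M_2 \to 0$ gives $\ell_1 = \operatorname{length}(\ker f) + \ell_2 \geq \ell_2$. Applying this to each of the two surjections yields simultaneously $\ell_1 \geq \ell_2$ and $\ell_2 \geq \ell_1$, whence $\ell_1 = \ell_2$. Feeding this equality back in forces $\operatorname{length}(\ker f) = 0$, so $\ker f = 0$ and $f$ is in fact an isomorphism; thus $M_1 \cong M_2$, establishing the DSB property. (Alternatively, one may avoid explicit length additivity by noting that a semisimple module is determined up to isomorphism by the multiplicities of its composition factors, and a surjection between semisimple modules bounds the target multiplicities by those of the source; mutual surjectivity then forces equal multiplicities and hence isomorphism.)

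I do not anticipate a genuine obstacle here. The only point requiring care is correctly pinning down that ``semisimple $+$ endoartinian $\Rightarrow$ finite length'' via the earlier equivalence, since without finiteness the length argument has no content; once finite length is in hand, the remainder is the standard fact that two mutually surjective finite-length modules are isomorphic.
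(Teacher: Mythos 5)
Your proposal is correct and follows essentially the same route as the paper's own proof: invoke the earlier equivalence that a semisimple endoartinian module has finite length, then use mutual epimorphisms plus length additivity to force equal lengths and hence zero kernels, so the surjections are isomorphisms. Your explicit kernel-counting step (and the multiplicity alternative) merely spells out what the paper compresses into ``mutual epimorphisms imply equal length, hence isomorphism.''
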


\begin{proof}
    Since $M$ is semisimple and endoartinian, it has finite length. For finite length modules, mutual epimorphisms imply equal length, hence isomorphism due to semisimplicity.
\end{proof}

Recall that an ideal \( I \subseteq R \) is called \emph{semiprime} if whenever \( J \subseteq R \) is an ideal with \( J^2 \subseteq I \), it follows that \( J \subseteq I \). Similarly, \( I \) is \emph{prime} if for any ideals \( J, K \subseteq R \), the containment \( JK \subseteq I \) implies \( J \subseteq I \) or \( K \subseteq I \). 

The following theorem establishes that, over semiprime rings, the notion of endoartinianity collapses to classical ring-theoretic finiteness. This result appears in Lam's first course \cite{lam2001firstcourse}*{Theorem~10.24}:

\begin{theorem} \label{semiprime endoartinian <=> artinian}
Let \( R \) be a ring. The following conditions are equivalent:
\begin{enumerate}
    \item \( R \) is semiprime and right endoartinian;
    \item \( R \) is semiprime and right artinian;
    \item \( R \) is semisimple.
\end{enumerate}
\end{theorem}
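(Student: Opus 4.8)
The plan is to first convert the endo-theoretic hypothesis into a classical chain condition and then reduce the entire equivalence to a single implication. By Proposition~\ref{endonoe <=>acc prin. ann.; endoart <=>dcc on prin. ide.}(2), a ring is right endoartinian exactly when it satisfies the descending chain condition on principal right ideals, equivalently when it is left perfect. Thus condition~(1) reads ``\(R\) is semiprime and left perfect.'' The implications \((3)\Rightarrow(2)\) and \((2)\Rightarrow(1)\) are immediate: a semisimple ring is right artinian with \(J(R)=0\), hence has no nonzero nilpotent ideals and is semiprime; and a right artinian ring has DCC on all right ideals, a fortiori on principal right ideals, so it is right endoartinian (this is the arrow \(\text{artinian}\Rightarrow\text{endoartinian}\) of the introductory diagram). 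Everything therefore hinges on \((1)\Rightarrow(3)\): a semiprime left perfect ring is semisimple.

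Since \(R\) is left perfect, \(R/J\) is semisimple (where \(J:=J(R)\)), so it suffices to prove \(J=0\); then \(R=R/J\) is semisimple. First I would extract a minimal-right-ideal statement from the chain condition alone. Given any nonzero right ideal \(I\), pick \(0\neq a_0\in I\); if \(a_0R\) is not a minimal right ideal, it properly contains a nonzero right ideal, hence a strictly smaller nonzero principal right ideal \(a_1R\subsetneq a_0R\), and so on. By the descending chain condition on principal right ideals this process terminates, and termination means the last term \(a_nR\) admits no nonzero right ideal strictly inside it, i.e.\ it is a minimal right ideal contained in \(a_0R\subseteq I\). Hence every nonzero right ideal of \(R\) contains a minimal right ideal; in particular \(\soc(R_R)\) is essential.

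Now I would feed in semiprimeness through Brauer's lemma. For a minimal right ideal \(A\) one has either \(A^2=0\) or \(A=eR\) with \(e=e^2\). In a semiprime ring there are no nonzero square-zero right ideals: if \(A^2=0\) then \(aRa\subseteq Aa\subseteq A^2=0\) for every \(a\in A\), forcing \(a=0\). Consequently every minimal right ideal equals \(eR\) for some idempotent \(e\neq 0\). Suppose now \(J\neq 0\). Being a nonzero right ideal, \(J\) contains a minimal right ideal \(eR\) by the previous paragraph, so \(J\) contains a nonzero idempotent \(e\). This is impossible, since \(e\in J\) with \(e^2=e\) gives \(e(1-e)=0\) with \(1-e\) a unit, whence \(e=0\). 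Therefore \(J=0\) and \(R\) is semisimple, establishing \((1)\Rightarrow(3)\).

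The crux, and the only genuine obstacle, is the vanishing of \(J\). It is worth stressing why the naive route fails: left perfectness guarantees only that \(J\) is (one-sidedly) \(T\)-nilpotent, hence nil, but \emph{not} nilpotent, so one cannot invoke the classical fact that a nilpotent ideal in a semiprime ring is zero, and ``nil plus semiprime'' by itself does not force \(J=0\). The argument above deliberately avoids nilpotency, trading it for the incompatibility between nonzero idempotents and the Jacobson radical once the chain condition supplies a minimal right ideal inside \(J\). (The statement is classical and may alternatively be quoted from \cite{lam2001firstcourse}*{Theorem~10.24} once Proposition~\ref{endonoe <=>acc prin. ann.; endoart <=>dcc on prin. ide.}(2) identifies right endoartinian rings with left perfect rings.)
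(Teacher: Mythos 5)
Your proof is correct. For comparison: the paper gives no inline argument for this theorem at all --- it records the statement as Lam's \cite{lam2001firstcourse}*{Theorem~10.24} (for a semiprime ring, DCC on principal right ideals, right artinian, and semisimple are all equivalent), with Proposition~\ref{endonoe <=>acc prin. ann.; endoart <=>dcc on prin. ide.}(2) translating ``right endoartinian'' into ``DCC on principal right ideals,'' which is exactly the alternative you mention in your closing parenthesis. What you have written out is, in substance, the standard proof of that cited theorem: the chain condition puts a minimal right ideal inside every nonzero right ideal, Brauer's lemma plus semiprimeness makes it idempotent-generated, $J(R)$ cannot contain a nonzero idempotent, and Bass's theorem ($R/J(R)$ is semisimple when $R$ is left perfect) finishes. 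So your route does not genuinely diverge from the paper's; rather it makes the paper self-contained, which has real value here since this theorem is load-bearing later (Corollary~\ref{prime endoartinain <=> isoartinian} and Theorem~\ref{semiprime endoartinian <=> f. d.p. matrix rings}). Two small remarks. First, your step ``$aRa=0$ forces $a=0$'' uses the element characterization of semiprimeness, whereas the paper defines semiprime via ideals; the one-line bridge is $(RaR)(RaR)\subseteq R(aRa)R=0$, so $RaR=0$ and hence $a=0$. Second, your aside that the ``naive route fails'' is slightly overstated: mere nilness of $J(R)$ is indeed not enough, but the left $T$-nilpotence that left perfectness provides \emph{is} enough, by a direct argument: given $0\neq a_0\in J(R)$, semiprimeness lets one choose $r_n$ inductively with $a_n:=a_{n-1}r_na_{n-1}\neq 0$; the elements $b_n:=a_{n-1}r_n\in J(R)$ then satisfy $b_n\cdots b_1a_0=a_n\neq 0$ for all $n$, contradicting $T$-nilpotence. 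That shortcut yields $J(R)=0$ with no appeal to Brauer's lemma or to the minimal-right-ideal extraction, so your proof could be shortened if desired.
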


A module \( M \) is called \emph{isosimple} if \( M \neq 0 \) and every nonzero submodule of \( M \) is isomorphic to \( M \). Let \( \mathcal{U} \) denote the class of all isosimple right \( R \)-modules.

The \emph{isosocle} of a right \( R \)-module \( M \) is defined as
\[
\operatorname{I\text{-}soc}(M) := \sum \{ h(U) \mid h \colon U \to M \text{ is an } R\text{-module homomorphism}, \, U \in \mathcal{U} \}.
\]
This notion, introduced as \( \operatorname{Tr}_R(\mathcal{U}) \) in \cite{anderson2012rings}, captures the largest submodule of \( M \) generated by images of isosimple modules. A ring \( R \) is said to be \emph{isosimple} if it is isomorphic (as a right module over itself) to each of its nonzero right ideals; equivalently, \( R \) is a principal right ideal domain (PRID). These ideas have been explored in depth by Facchini and Nazemian in \citelist{\cite{facchini2016modules} \cite{facchini2017artinian} \cite{facchini2019iso}}.

We now show that for prime rings, the endoartinian and isoartinian conditions are equivalent.

\begin{corollary} \label{prime endoartinain <=> isoartinian}
Let \( R \) be a prime ring. Then \( R \) is right endoartinian if and only if it is right isoartinian.
\end{corollary}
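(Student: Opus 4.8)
The plan is to obtain this as a direct corollary of Theorem~\ref{semiprime endoartinian <=> artinian}, using only the elementary observation that every prime ring is semiprime. I expect the two implications to be markedly asymmetric: the passage from isoartinian to endoartinian is the general, prime-free implication already built into the hierarchy of Section~1, whereas the reverse direction is where the prime hypothesis does all the work, by forcing the ring to collapse to a semisimple one.

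For the backward implication, suppose $R$ is right isoartinian. Then $R$ is right endoartinian at once, since Iso-Artinian $\Rightarrow$ Endo-Artinian holds for arbitrary modules (this is one of the implications displayed in the Introduction, and it requires no hypothesis on $R$). First I would simply invoke this general fact; no primeness enters here.

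For the forward implication, suppose $R$ is prime and right endoartinian. Since a prime ring is in particular semiprime, $R$ is a semiprime right endoartinian ring, so Theorem~\ref{semiprime endoartinian <=> artinian} applies and yields that $R$ is semisimple; one may note in passing that primeness then pins $R$ down to a single matrix block, i.e.\ $R \cong \mat_n(D)$ for some division ring $D$. In either form $R$ is right artinian, and since Artinian $\Rightarrow$ Iso-Artinian in the hierarchy of Section~1, $R$ is right isoartinian, completing the equivalence.

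The main obstacle is conceptual rather than technical: there is no hard computation, and the crux is to recognize that the prime hypothesis is needed only to license the use of Theorem~\ref{semiprime endoartinian <=> artinian}, which already does the heavy lifting by collapsing right endoartinianity to full semisimplicity. The one point I would check carefully is that the semisimplicity conclusion, being a symmetric two-sided condition, does deliver the one-sided right isoartinian property asserted in the statement, and that the converse genuinely needs no assumption beyond the general hierarchy implication.
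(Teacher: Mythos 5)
Your proposal is correct and follows essentially the same route as the paper's proof: both directions hinge on the observation that prime implies semiprime, the application of Theorem~\ref{semiprime endoartinian <=> artinian} to collapse $R$ to a semisimple (indeed simple artinian, $R\cong\mathbb{M}_n(D)$) ring, and the hierarchy implications for the remaining steps. The only cosmetic difference is that the paper spells out directly why a simple artinian ring is right isoartinian (right ideals are finite direct sums of the unique simple module), whereas you invoke the trivial implication Artinian $\Rightarrow$ Iso-Artinian from the introduction's diagram; these amount to the same thing.
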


\begin{proof}
Let $R$ be a prime, right endoartinian ring. 
By Theorem~\ref{semiprime endoartinian <=> artinian}, every semiprime right endoartinian ring is right artinian, hence semisimple. Since $R$ is prime and semisimple, it must be simple artinian (that is, $R\cong \mathbb{M}_n(D)$ for some division ring $D$). In a simple artinian ring every right ideal is a finite direct sum of copies of the unique simple module, so any descending chain of right ideals stabilizes up to isomorphism. 
Thus $R$ is right isoartinian.
\end{proof}

Facchini and Nazemian posed the following question in \cite{facchini2017artinian}*{Question 4.11(2)}:
\begin{quote}
    Is a semiprime right isoartinian ring necessarily right noetherian?
\end{quote}

This question was answered affirmatively in the commutative case by \cite{facchini2019iso}*{Corollary 3.20}, but remains open in the general noncommutative setting. The next result identifies a structural condition under which the answer is indeed positive in the noncommutative case.

\begin{proposition} \label{simple semiprime ring <=> noetherian}
    Let $R$ be a nonzero simple ring. The following statements are equivalent.
    \begin{enumerate}
        \item $R$ is right isoartinian;
        \item $R\cong \mathbb{M}_n(D)$ for some $n\ge 1$ and some simple principal right ideal domain $D$.
    \end{enumerate}
    Moreover, if in addition $D$ is a division ring, then $R$ is simple artinian (hence right artinian and right noetherian).
\end{proposition}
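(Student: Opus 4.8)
The plan is to prove $(1)\Leftrightarrow(2)$ directly within isoartinian structure theory and then read off the ``moreover'' clause. I will repeatedly use three facts from the preceding material: a ring is right isosimple precisely when it is a principal right ideal domain (PRID); the isosocle $\operatorname{I\text{-}soc}(R_R)=\operatorname{Tr}_R(\mathcal U)$ is fully invariant, hence a two-sided ideal; and isoartinianity, being defined purely through the submodule lattice and isomorphism classes, is preserved by Morita equivalence. I deliberately avoid routing the proof through endoartinianity: because a simple PRID need not be a division ring, a simple right isoartinian ring need not be artinian, so the equivalence cannot be reduced to Theorem~\ref{semiprime endoartinian <=> artinian}.

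For $(2)\Rightarrow(1)$, assume $R\cong\mathbb{M}_n(D)$ with $D$ a simple PRID. Since $D$ is isosimple, every nonzero right ideal of $D$ is isomorphic to $D_D$, so $D_D$ is trivially isoartinian. Passing to $R$ via the Morita equivalence $\mathbf{Mod}\text{-}\mathbb{M}_n(D)\simeq\mathbf{Mod}\text{-}D$, under which $R_R$ corresponds to $(D^n)_D$, it suffices to prove $(D^n)_D$ is isoartinian. Here I would invoke the PRID structure theory: $D$ is a right Noetherian (every right ideal is principal, hence finitely generated) right Ore domain, so it has a division ring of right fractions $Q$, and every submodule of the free module $(D^n)_D$ is again free of rank at most $n$. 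Along any descending chain $M_1\supseteq M_2\supseteq\cdots$ the ranks $\dim_Q(M_i\otimes_D Q)$ are non-increasing, hence eventually constant; once the rank stabilizes all later $M_i$ are free of the same rank and therefore mutually isomorphic. Thus $(D^n)_D$, and with it $R_R$, is isoartinian.

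For $(1)\Rightarrow(2)$ I would argue in four steps. First, every nonzero right isoartinian module contains an isosimple submodule: if not, then starting from any nonzero submodule one selects at each stage a nonzero proper submodule not isomorphic to the current one (possible exactly because no submodule is isosimple), producing a descending chain $N_1\supsetneq N_2\supsetneq\cdots$ with $N_{i+1}\not\cong N_i$ for all $i$, contradicting isoartinianity; applied to $R_R$ this yields an isosimple right ideal $U$. Second, $\operatorname{Tr}_R(U)$ is a nonzero two-sided ideal, so simplicity gives $\operatorname{Tr}_R(U)=R$; thus $U$ is a generator, and writing $1=\sum_{j=1}^m h_j(u_j)$ produces a split epimorphism $U^m\twoheadrightarrow R_R$, so $R_R$ is a direct summand of $U^m$. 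Third, combining the generator property with isoartinianity, I would establish the finite isotypic decomposition $R_R\cong U^n$; then $U\cong eR$ for an idempotent $e$, so $U$ is finitely generated projective, and $R\cong\morp_R(R_R)\cong\morp_R(U^n)\cong\mathbb{M}_n(D)$ where $D:=\morp_R(U)\cong eRe$. Fourth, since $U$ is a finitely generated projective generator it is a progenerator, so $R$ is Morita equivalent to $D$; simplicity is Morita invariant, giving $D$ simple, and the regular module $D_D$ corresponds to $U$ under the equivalence, so isosimplicity of $U$ transfers to $D_D$, i.e.\ $D$ is a PRID. Hence $D$ is a simple PRID.

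The main obstacle is the third step: upgrading ``$R_R$ is a direct summand of $U^m$'' to the finite homogeneous decomposition $R_R\cong U^n$. Being a summand of $U^m$ supplies projectivity but not, by itself, finite generation of $U$; moreover isosimple modules need not have local endomorphism rings, so the Krull--Schmidt theorem is unavailable to force the summands of $U^m$ to be copies of $U$. The finiteness and the isotypic form of the decomposition must instead be extracted by playing the generator property against isoartinianity, and this is precisely where the Facchini--Nazemian structure theory for isoartinian rings is needed. Once $(1)\Leftrightarrow(2)$ is in hand, the ``moreover'' clause is immediate: if $D$ is a division ring then $\mathbb{M}_n(D)$ is simple artinian, hence both right artinian and (being semisimple) right noetherian.
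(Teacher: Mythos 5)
Your proposal takes a different route from the paper, but it is incomplete at the decisive point. The paper's entire proof of $(1)\Leftrightarrow(2)$ is a citation of \cite{facchini2019iso}*{Theorem 3.19(2)}, followed by the observation that $\mathbb{M}_n(D)$ is simple artinian when $D$ is a division ring; you instead try to reconstruct that theorem from first principles. Much of your reconstruction is sound. The direction $(2)\Rightarrow(1)$ is a complete, self-contained argument: over a PRID every submodule of $(D^n)_D$ is free of rank at most $n$, the rank computed via the Ore division ring of fractions is non-increasing along a descending chain, and once it stabilizes all subsequent terms are isomorphic to $D^r$; Morita equivalence then transports isoartinianity from $(D^n)_D$ to $R_R$. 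Steps 1, 2 and 4 of the converse are also correct: every nonzero isoartinian module contains an isosimple submodule, the trace-ideal argument makes the isosimple right ideal $U$ a generator with $R_R$ a direct summand of $U^m$, and, \emph{once} $R_R\cong U^n$ is known, the Morita argument correctly identifies $R$ with $\mathbb{M}_n(D)$ for $D=\morp_R(U)$ a simple PRID.

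The gap is Step 3, and you name it yourself: nothing in the proposal actually upgrades ``$R_R$ is a direct summand of $U^m$'' to ``$R_R\cong U^n$''. This is not a routine repair. Since isosimple modules need not have local endomorphism rings, no Krull--Schmidt or exchange argument is available, and a direct summand of $U^m$ need not decompose into copies of $U$; producing this homogeneous decomposition is the mathematical core of Facchini and Nazemian's Theorem 3.19(2), i.e.\ of the very statement being proved. Your appeal to ``the Facchini--Nazemian structure theory'' at exactly this point therefore makes the argument circular as a blind proof: the only result in the literature that does this work is the theorem itself. As it stands, your proposal genuinely proves $(2)\Rightarrow(1)$ but reduces $(1)\Rightarrow(2)$ to the citation the paper makes anyway; to be correct it should either fill Step 3 with an actual argument or, like the paper, cite \cite{facchini2019iso}*{Theorem 3.19(2)} explicitly for that implication. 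The ``moreover'' clause you handle correctly, just as the paper does.
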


\begin{proof}
    The equivalence between \((1)\) and \((2)\) follows from \cite{facchini2019iso}*{Theorem 3.19(2)}, which shows that a nonzero simple ring is right isoartinian if and only if it is isomorphic to a full matrix ring $\mathbb{M}_n(D)$, where $D$ is a simple principal right ideal domain.

    If moreover, $D$ is a division ring, then $\mathbb M_n(D)$ is finite-dimensional over $D$ and hence simple artinian. In particular it is right artinian, and artinian rings are right noetherian (by Hopkins-Levitzki theorem).
\end{proof}

\begin{remark}
The assumption that $R$ is simple is essential in Proposition \ref{simple semiprime ring <=> noetherian}, as it enables the use of both Corollary \ref{prime endoartinain <=> isoartinian} (which requires primeness) and \cite{facchini2019iso}*{Theorem~3.19(2)}.  Outside the simple setting, the structure of right isoartinian rings remains largely unexplored.

Facchini's open problem \cite{facchini2017artinian}—whether every isoartinian ring is necessarily noetherian—remains unresolved in general. The simple case shows that, within the class of simple rings, a counterexample would necessarily arise from a non-noetherian simple PRID. However, potential counterexamples could also exist among non-simple rings, where the situation is far less understood. \qed
\end{remark}

We now provide a full structural characterization of semiprime right endoartinian rings. This theorem synthesizes several strands of the theory—endoartinianity, and hereditary behavior into a unified classification that parallels the Artin–Wedderburn theorem but in the endo-theoretic context.

\begin{theorem} \label{semiprime endoartinian <=> f. d.p. matrix rings}
Let $R$ be a ring. The following statements are equivalent:
\begin{enumerate}
    \item $R$ is semiprime and right endoartinian;
    \item $R$ is semisimple;
    \item $R \cong \prod_{i=1}^n M_{n_i}(D_i)$, where each $D_i$ is a division ring;
    \item $R$ is right hereditary and soc$(R_R) = R$ is of finite length.
\end{enumerate}
\end{theorem}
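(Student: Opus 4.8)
The plan is to prove the four conditions equivalent by reducing most of the work to results already available in the paper. Conditions (1) and (2) are equivalent by Theorem~\ref{semiprime endoartinian <=> artinian}, which records that a semiprime right endoartinian (equivalently, semiprime left perfect) ring is precisely a semisimple ring. Conditions (2) and (3) are equivalent by the Artin--Wedderburn theorem, which identifies a semisimple ring with a finite product $\prod_{i=1}^n M_{n_i}(D_i)$ of matrix rings over division rings. Hence the only genuinely new content is the equivalence of (2) with (4), and I would organize the argument as the two implications (2)$\Rightarrow$(4) and (4)$\Rightarrow$(2), after which all four conditions chain together.

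For (2)$\Rightarrow$(4), I would argue as follows. If $R$ is semisimple then every right $R$-module is semisimple, hence projective; in particular every submodule of a projective module is a direct summand and therefore projective, so $R$ is right hereditary. Moreover $R_R$ is itself semisimple, so $\soc(R_R)=R_R=R$. Finiteness of the length comes from the standard support-of-unity argument: writing $R_R=\bigoplus_{i\in I}S_i$ as a direct sum of simple right ideals and expressing $1=\sum_{i\in F}s_i$ with $F\subseteq I$ finite, one obtains $R=1\cdot R\subseteq\bigoplus_{i\in F}S_i$, forcing $I=F$ to be finite. Thus $R_R$ has finite length and (4) holds.

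For (4)$\Rightarrow$(2), the key observation is that the hypothesis $\soc(R_R)=R$ already asserts that the regular module $R_R$ coincides with its own socle, i.e.\ that $R_R$ is a semisimple module. By the standard characterization of semisimple rings (a ring is semisimple if and only if its regular module is semisimple as a module), this is exactly condition (2). In particular neither the hereditary hypothesis nor the finite-length hypothesis is needed for this implication; both are automatic consequences of semisimplicity, and their presence in (4) merely frames it as a self-contained structural description paralleling the classical characterizations of hereditary rings.

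The main obstacle is not in the elementary socle computation but in correctly marshaling the two external inputs: the identification of semiprime right endoartinian rings with semisimple rings (Theorem~\ref{semiprime endoartinian <=> artinian}, via left perfectness) and the Artin--Wedderburn decomposition. Once these are in place, the equivalence with (4) collapses to the remark that $\soc(R_R)=R$ is just a restatement of ``$R_R$ is semisimple,'' so the only care required is the finiteness argument in (2)$\Rightarrow$(4) together with the verification that semisimple rings are right hereditary.
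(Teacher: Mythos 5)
Your proposal is correct, and all four equivalences hold as you argue them; but your route differs from the paper's in the one implication that carries real content. The paper proves the cycle (1)$\Rightarrow$(2)$\Rightarrow$(3)$\Rightarrow$(4)$\Rightarrow$(1), and its closing step (4)$\Rightarrow$(1) is the laborious one: it uses $\soc(R_R)=R$ together with finite length to write $R_R\cong S_1\oplus\cdots\oplus S_m$, then shows that any descending chain of principal right ideals must stabilize because the lengths $\ell(a_iR)\leq m$ form a nonincreasing sequence of nonnegative integers and equal lengths plus inclusion force equality; this verifies DCC on principal right ideals (hence right endoartinianity via Proposition~\ref{endonoe <=>acc prin. ann.; endoart <=>dcc on prin. ide.}), and semiprimeness is noted separately. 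You instead hub everything at (2) and dispatch (4)$\Rightarrow$(2) in one line: $\soc(R_R)=R$ says precisely that $R_R$ is a sum of simple submodules, i.e.\ semisimple, and a ring is semisimple exactly when its regular module is. This is a genuinely shorter and more transparent argument, and it has the added value of exposing that the hereditary and finite-length clauses in (4) are logically redundant as hypotheses (they are consequences of $\soc(R_R)=R$ alone, the finiteness by your support-of-unity argument). What the paper's longer route buys is a self-contained verification of the endoartinian condition itself, staying inside the paper's own machinery (length arguments plus Proposition~\ref{endonoe <=>acc prin. ann.; endoart <=>dcc on prin. ide.}) rather than appealing to the external characterization of semisimple rings via the socle; but your appeal is to a completely standard fact, so nothing is at risk. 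Both proofs are sound.
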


\begin{proof}
(1) $\Rightarrow$ (2): By \Cref{semiprime endoartinian <=> artinian}, a semiprime right endoartinian ring is right artinian and semisimple. 

(2) $\Rightarrow$ (3): This is the Wedderburn–Artin theorem: every semisimple ring is isomorphic to a finite product of full matrix algebras over division rings (\cite{lam2001firstcourse}*{Theorem 3.5}).

(3) $\Rightarrow$ (4): Suppose $R\cong \prod_{i=1}^n \mathbb{M}_{n_i} (D_i)$ with each $D_i$ a division ring. Each factor $\mathbb{M}_{n_i} (D_i)$ is simple artinian and decomposes, as a right module over itself, into a direct sum of $n_i$ minimal (hence simple) right ideals. Therefore $R_R$ is a finite direct sum of simple right ideals; equivalently $\soc(R_R)=R$ and the socle has finite length. Moreover, semisimple rings are right (and left) hereditary, since every submodule of a projective module is projective; hence $R$ is right hereditary. This gives (4).

(4) $\Rightarrow$ (1): Now assume $R$ is right hereditary and $\soc(R_R)=R$ has finite length. From $\soc(R_R)=R$ and the finite-length hypothesis we deduce that

$$R_R\cong S_1 \oplus \cdots \oplus S_m$$

for finitely many simple projective right ideals $S_1, \cdots, S_m$; in particular $R_R$ has finite composition length $m$.

Let $$a_1R\supseteq a_2 R\supseteq \cdots$$ be any descending chain of principal right ideals. Each $a_i R$ is a submodule of $R_R$ and therefore has finite length  $\ell (a_i R)\leq m$. The sequence of nonnegative integers $\ell (a_1 R)\geq \ell (a_2 R)\geq \cdots$ is nonincreasing and bounded below, so it stabilizes. Once the lengths stabilize, inclusion together with equality of lengths forces equality of the submodules: if $X\supseteq Y$ are submodules of a finite length module and $\ell (X)=\ell (Y)$, then $X=Y$. Hence there exists $N$ such that $a_k R=a_N R$ for all $k\geq N$. Thus $R$ satisfies DCC on principal right ideals; by Proposition~\ref{endonoe <=>acc prin. ann.; endoart <=>dcc on prin. ide.} this is equivalent to $R$ being right endoartinian. Moreover, a finite direct sum of simple modules is semiprime. Therefore (1) holds.
\end{proof}

\section{In Relation with K{\"o}the Rings} \label{sec3}

A ring $R$ is called a \emph{right K{\"o}the ring} if each right $R$-module is a direct sum of cyclic right modules (these rings were named to honor G. K{\"o}the who initially studied them in \cite{kothe1935verallgemeinerte}). A ring is \emph{K{\"o}the} if it is both left and right K\"{o}the. A ring is \emph{right duo} if each right ideal is two-sided. K\"{o}the, Cohen and Kaplansky \citelist{\cite{kothe1935verallgemeinerte} \cite{cohen1951rings}} showed that K{\"o}the rings and artinian principal ideal ring coincide if the ring is commutative.

In this section, we explore the structure of rings in which all idempotents are central, under the assumption that the ring is right endoartinian. We establish the equivalence of several important conditions in this context, generalizing classical results on artinian principal ideal rings and K\"{o}the rings. Our aim is to extend \cite{behboodi2014left}*{Corollary 3.3} to the full generality of endoartinian rings, under appropriate structural assumptions.

Our first result is a generalization of a theorem by Habeb \cite{habeb1990note} to the endoartinian setting. It shows how the condition that all idempotents are central forces a ring to decompose into a product of local rings.

\begin{theorem} \label{Habeb theorem}
 Let \( R \) be a right endoartinian ring. The following statements are equivalent:
 \begin{enumerate}
     \item Every idempotent in \( R \) is central;
     \item \( R \cong \prod_{i=1}^n R_i \), where each \( R_i \) is a local ring.
 \end{enumerate}
\end{theorem}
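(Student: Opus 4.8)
The plan is to reduce everything to the structure theory of semiperfect rings and the identification of right endoartinian rings with left perfect rings. The direction $(2) \Rightarrow (1)$ is immediate and I would dispatch it first: in a local ring the only idempotents are $0$ and $1$, so the idempotents of $\prod_{i=1}^n R_i$ are precisely the tuples with entries in $\{0,1\}$, and such tuples are visibly central in the product. Hence every idempotent of $R$ is central.

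For the substantive direction $(1) \Rightarrow (2)$, the first step is to invoke Proposition~\ref{endonoe <=>acc prin. ann.; endoart <=>dcc on prin. ide.}(2), which tells us that a right endoartinian ring is exactly a left perfect ring. Since every left perfect ring is semiperfect, $R$ is semiperfect, and I would then call on the standard characterization of semiperfect rings: the identity admits a decomposition $1 = e_1 + \cdots + e_n$ into pairwise orthogonal \emph{local} idempotents, meaning each corner ring $e_i R e_i$ is local. The index $n$ is finite because $R/J(R)$ is semisimple of finite length.

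Next I would exploit the central-idempotents hypothesis. Each $e_i$ is central, so the orthogonal decomposition $1 = e_1 + \cdots + e_n$ by central idempotents produces a ring isomorphism $R \cong \prod_{i=1}^n R e_i$, where $R e_i$ is a two-sided ideal that is itself a ring with identity $e_i$. Centrality of $e_i$ also forces $R e_i = e_i R e_i$, and since $e_i$ is a local idempotent, this corner ring is local. Setting $R_i := R e_i$ then yields the desired decomposition of $R$ as a finite product of local rings.

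The main obstacle, as I see it, is not any single hard computation but the correct importation of the semiperfect structure theory: one must be careful to record that a left perfect (hence semiperfect) ring genuinely admits a \emph{complete} orthogonal set of \emph{local} idempotents, and then to verify that the centrality assumption collapses $e_i R e_i$ onto $R e_i$ so that locality of the corner ring passes to locality of the direct factor. Once these two points are pinned down, the remainder is routine bookkeeping with orthogonal central idempotents. A secondary point worth stating explicitly is why $n$ is finite, which follows from the semisimplicity (and finite length) of $R/J(R)$.
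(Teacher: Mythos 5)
Your proof is correct and follows essentially the same route as the paper: right endoartinian $\Rightarrow$ left perfect $\Rightarrow$ semiperfect, a finite complete orthogonal set of local idempotents summing to $1$, and centrality collapsing the Peirce decomposition into a finite product of local corner rings. In fact your $(2)\Rightarrow(1)$ direction is slightly more careful than the paper's, since you observe that \emph{all} idempotents of a product of local rings are $\{0,1\}$-tuples (local rings having only trivial idempotents) and hence central, whereas the paper only remarks that the idempotents arising from the product decomposition are central.
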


\begin{proof}
(2) \(\Rightarrow\) (1). If \(R\cong\prod_{i=1}^n R_i\) with each \(R_i\) local, the central idempotents corresponding to the product decomposition are clearly central. 

(1) \(\Rightarrow\) (2). Assume every idempotent of $R$ is central. Since $R$ is right endoartinian, it is left perfect by Proposition \ref{endonoe <=>acc prin. ann.; endoart <=>dcc on prin. ide.}. In particular, $R$ is semiperfect, so $R/J(R)$ is semisimple artinian and idempotents lift modulo $J(R)$.

Because $R$ is right endoartinian, it satisfies the DCC on principal right ideals. This property implies that $R$ cannot contain an infinite set of orthogonal idempotents. Therefore, there exists a finite complete set of orthogonal primitive idempotents $\{e_1,\cdots, e_n\}\subseteq R$ with $1=e_1+\cdots+e_n$.

For each $i$, the indecomposable projective right ideal $e_i R$ has a local endomorphism ring $e_i Re_i$. Since all idempotents of $R$ are central by hypothesis, these $e_i$  are central. Therefore, the Peirce decomposition gives a ring isomorphism $$R\cong \prod_{i=1}^n e_i Re_i$$

with each factor $e_i Re_i$ local, as required.
\end{proof}

We now present the main theorem of this section, which characterizes K{\"o}the rings in the endoartinian context under the hypotheses: the principal right ideal ring.

\begin{theorem} \label{thm:kothe-endoartinian}
Let \( R \) be a ring in which every idempotent is central. Assume \( R \) is a principal ideal ring. Then the following conditions are equivalent:
\begin{enumerate}
  \item \( R \) is endoartinian;
  \item \( R \) is a K\"{o}the ring;
  \item \( R \cong \prod_{i=1}^n R_i \) where each \( R_i \) is an artinian uniserial ring.
\end{enumerate}
\end{theorem}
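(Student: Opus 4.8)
The plan is to prove the cycle $(1)\Rightarrow(3)\Rightarrow(2)\Rightarrow(1)$, using the earlier structural results to reduce everything to the behaviour of the local factors and then appealing to classical K\"{o}the--Nakayama theory for the module-decomposition statement. The organizing idea is that Theorem~\ref{Habeb theorem} lets us replace $R$ by a finite product of local rings, after which the combination ``principal ideal ring $+$ endoartinian $+$ local'' should be exactly the data needed to force the artinian uniserial structure on each factor.

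For $(1)\Rightarrow(3)$, I would apply Theorem~\ref{Habeb theorem} (its hypothesis that every idempotent is central is in force) to write $R\cong\prod_{i=1}^n R_i$ with each $R_i$ local. Each $R_i$ is a direct factor, hence a quotient $R/I_i$, so by Corollary~\ref{subrings} it is again right endoartinian, and it is clearly again a principal ideal ring. I would then show that a \emph{local, endoartinian, principal ideal ring} $R_i$ is artinian uniserial. Writing $J=J(R_i)=\pi R_i$ for a generator $\pi$, the powers $J^k=\pi^k R_i$ form a descending chain of principal right ideals; by endoartinianity and Proposition~\ref{endonoe <=>acc prin. ann.; endoart <=>dcc on prin. ide.} this chain stabilizes, say $J^n=J^{n+1}$, and Nakayama's lemma (applicable since $J^n$ is finitely generated) gives $J^n=0$. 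Locality then makes $R_i$ artinian with residue division ring $R_i/J$. Finally, the dichotomy ``unit or in $J$'' shows that a nonzero element lying in $J^k\setminus J^{k+1}$ generates exactly $J^k$, so each principal right ideal is a power of $J$; as $R_i$ is a principal ideal ring, every right ideal is then a power of $J$, and the right ideals form the finite chain $R_i\supset J\supset\cdots\supset J^n=0$. The symmetric argument on the left (using that a principal ideal ring is two-sided) shows $R_i$ is artinian uniserial.

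For $(3)\Rightarrow(2)$, each artinian uniserial ring is in particular a generalized uniserial (Nakayama) ring, and the classical K\"{o}the--Nakayama theorem (\cite{kothe1935verallgemeinerte}) guarantees that every module over such a ring is a direct sum of cyclic uniserial modules; since the K\"{o}the property passes to finite direct products, $R\cong\prod_i R_i$ is a K\"{o}the ring.

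For $(2)\Rightarrow(1)$, I expect the genuine work to lie. The K\"{o}the hypothesis is purely decomposition-theoretic, and the task is to extract from it a chain condition. The cleanest route is to invoke the structure theory of K\"{o}the rings under our standing hypotheses---precisely the content we are generalizing, \cite{behboodi2014left}*{Corollary 3.3}---to conclude that a principal-ideal K\"{o}the ring with central idempotents is already a finite product of artinian uniserial rings, and in particular (left and right) artinian. Once $R$ is artinian it satisfies the descending chain condition on all right ideals, a fortiori on principal right ideals, which by Proposition~\ref{endonoe <=>acc prin. ann.; endoart <=>dcc on prin. ide.} is exactly right endoartinianity. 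The main obstacle, and the step requiring external input, is thus this passage from the K\"{o}the decomposition property to artinianity: unlike the chain conditions manipulated elsewhere in the paper, the K\"{o}the property does not directly control descending chains of principal ideals, so I would lean on the K\"{o}the--Cohen--Kaplansky--Behboodi structure theorem rather than attempt a self-contained chain argument.
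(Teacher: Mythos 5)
Your proposal is correct and follows the same overall strategy as the paper: split $R$ into local factors via the central-idempotent/semiperfect decomposition, use the PIR hypothesis to force each factor to be artinian uniserial, invoke classical Nakayama--K\"{o}the theory for $(3)\Rightarrow(2)$, and rely on external K\"{o}the-ring structure theory for the remaining implication. Two differences are worth recording. First, your treatment of the local factors in $(1)\Rightarrow(3)$ is genuinely stronger than the paper's: the paper asserts that each $e_iRe_i$ is ``local artinian'' directly from perfectness, but a local left perfect ring need not be artinian (the paper's own Example~\ref{endoart ring not endonoe} is local with $J^2=0$ and non-artinian), so the PIR hypothesis must enter at exactly the point where you put it --- each $J^k$ is a principal right ideal, the chain $J\supseteq J^2\supseteq\cdots$ stabilizes by DCC on principal right ideals, and Nakayama gives $J^n=0$, after which the ``unit or in $J$'' dichotomy yields the finite chain of ideals. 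One small repair: noncommutatively you cannot write $J^k=\pi^kR_i$ just because $J=\pi R_i$; but this is harmless, since the PIR hypothesis makes each $J^k$ principal on its own, which is all your chain argument needs. Second, for the implication out of $(2)$ the paper argues ``K\"{o}the $\Rightarrow$ artinian'' and then re-runs its $(1)\Rightarrow(3)$ argument, whereas you invoke the Behboodi et al.\ structure theorem (\cite{behboodi2014left}*{Corollary 3.3}) to get the full product decomposition at once; both routes lean on external results of comparable weight, yours buying directness at the cost of citing the very statement being generalized, the paper's keeping the external input slightly lighter (only K\"{o}the $\Rightarrow$ artinian) and recycling its own decomposition argument.
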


\begin{proof}
(1) $\Rightarrow$ (3): Assume $R$ is endoartinian. By Proposition \ref{endonoe <=>acc prin. ann.; endoart <=>dcc on prin. ide.}, it is perfect, so it is semiperfect. Since all idempotents in $R$ are central by hypothesis, there exists a finite set of central primitive idempotents $\{e_1, \dots, e_n\}$ such that $1 = e_1 + \cdots + e_n$ and
\[
R \cong e_1Re_1 \times e_2Re_2 \times \cdots \times e_nRe_n 
   = \prod_{i=1}^n R_i.
\]
Each $R_i=e_i Re_i$ is a local artinian ring. Furthermore, since $R$ is a PIR, each $R_i$ is also a PIR. In a local artinian ring, the property of being a PIR forces the lattice of ideals to be linearly ordered; that is, each $R_i$ is artinian and uniserial.

(3) $\Rightarrow$ (1): A finite direct product of artinian rings is artinian. From \cite{behboodi2014left}*{Corollary 3.3}, a finite direct product of uniserial rings is a principal ideal ring. Therefore, $R$ is artinian and a PIR, which implies it is  endoartinian.

(3) $\Rightarrow$ (2): Assume that $R$ is artinian uniserial ring and $M$ be an arbitrary left $R$–module. By Nakayama \cite{nakayama1939frobenius-2}, we can write $M \cong \bigoplus_{i\in \mathbb{Z}} M_i$ as a direct sum of uniserial submodules $M_i$.  By \cite{wisbauer1991foundations}*{55.1(2)}, each finitely generated submodule of any $M_i$ is cyclic. In particular, every finite-length (hence every finitely generated) uniserial summand $M_i$ is cyclic. But each $M_i$ that occurs in the direct-sum decomposition of $X$ has finite length because $R$ is artinian (indeed, each $U_\alpha$ is both uniform and of finite length over an artinian ring). Hence every summand $M_i$ is cyclic. Therefore, $M$ is a direct sum of cyclic modules, proving that $R$ is a K\"{o}the ring.

(2) $\Rightarrow$ (3): If $R$ is a K\"othe ring, then it is  artinian. Since $R$ is also a PIR by hypothesis, we may apply the decomposition argument from (1) $\Rightarrow$ (3) to conclude that
\[
R \cong \prod_{i=1}^n R_i,
\]
where each $R_i$ is an artinian uniserial ring.
\end{proof}

The following technical result is useful for analyzing the structure of local rings but is not required for the proofs above. We include it for completeness.

\begin{proposition} \label{endoartinian <=> r/m^2 principal}
    Let \( (R, \mathfrak{m}) \) be a local ring with \( \mathfrak{m}^k = 0 \) for some \( k \geq 1 \). Then \( R \) is a principal right ideal ring if and only if the quotient ring \( R/\mathfrak{m}^2 \) is a principal right ideal ring.
\end{proposition}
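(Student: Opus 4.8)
The plan is to treat the two implications separately, with essentially all the work concentrated in the reverse direction. The forward implication---that $R$ being a principal right ideal ring forces $R/\mathfrak{m}^2$ to be one---is routine: any right ideal of the quotient $R/\mathfrak{m}^2$ is the image $\pi(I)$ of a right ideal $I\subseteq R$ under the canonical projection $\pi$, and if $I=aR$ then $\pi(I)=\pi(a)\,(R/\mathfrak{m}^2)$ is again principal. So I would dispose of this direction in a single line.

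For the reverse implication I would actually prove the stronger statement that $R$ is a right chain ring (its right ideals are linearly ordered), which certainly yields a principal right ideal ring. The strategy has two stages: first extract a single generator of $\mathfrak{m}$, then bootstrap along the $\mathfrak{m}$-adic filtration using $\mathfrak{m}^k=0$. For the first stage, since $R/\mathfrak{m}^2$ is a principal right ideal ring, \emph{all} its right ideals are principal, in particular $\mathfrak{m}/\mathfrak{m}^2$ is; hence there is $t\in\mathfrak{m}$ with $\mathfrak{m}=tR+\mathfrak{m}^2$. I would then run a Nakayama argument adapted to the nilpotent radical: setting $N:=\mathfrak{m}/tR$, the equality $\mathfrak{m}=tR+\mathfrak{m}^2$ gives $N\mathfrak{m}=(\mathfrak{m}^2+tR)/tR=\mathfrak{m}/tR=N$, whence $N=N\mathfrak{m}=N\mathfrak{m}^2=\cdots=N\mathfrak{m}^k=0$. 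Thus $\mathfrak{m}=tR$ is principal.

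The second stage converts principality of $\mathfrak{m}$ into a complete description of all right ideals. Since $\mathfrak{m}=J(R)$ is two-sided in a local ring, one has $RtR\subseteq\mathfrak{m}=tR$, and a short induction yields $\mathfrak{m}^i=t^iR$ for every $i$: the containment $\mathfrak{m}^{i+1}=t^iR\cdot tR\subseteq t^{i+1}R$ uses $RtR\subseteq tR$, while the reverse inclusion is immediate from $t^{i+1}r=(t^i)(tr)$. Because $\mathfrak{m}^k=0$, every nonzero $a\in R$ lies in a unique $\mathfrak{m}^i\setminus\mathfrak{m}^{i+1}$; writing $a=t^i r$, the factor $r$ cannot lie in $\mathfrak{m}$ (else $a\in\mathfrak{m}^{i+1}$), so $r$ is a unit and $aR=t^iR=\mathfrak{m}^i$. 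Hence every principal right ideal is one of the $\mathfrak{m}^i$, these are linearly ordered by inclusion, and an arbitrary right ideal $I=\sum_{a\in I}aR$ therefore equals the largest $\mathfrak{m}^{i_0}$ occurring among its principal subideals, which is principal. This gives that $R$ is a right chain ring and completes the reverse direction.

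The step I expect to require the most care is the identity $\mathfrak{m}^i=t^iR$ together with the normal form $a=t^i(\text{unit})$, since these are exactly where noncommutativity intervenes: I must invoke that $\mathfrak{m}$ is a two-sided ideal to move powers of $t$ past ring elements (through $RtR\subseteq tR$) and must avoid silently treating $t$ as central. The Nakayama step likewise needs to be justified through nilpotency of $\mathfrak{m}$ rather than finite generation, as $R$ is not assumed noetherian; the hypothesis $\mathfrak{m}^k=0$ is precisely what makes the iterated substitution terminate.
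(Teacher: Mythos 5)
Your proof is correct, and it is worth contrasting with the paper's own argument, which is much sketchier: the paper lifts a finite set of elements $x_1,\dots,x_n\in\mathfrak{m}$ whose images generate the right ideals of $R/\mathfrak{m}^2$, invokes ``Nakayama's Lemma,'' and then appeals to an unspecified induction on the nilpotency index to conclude that every right ideal of $R$ is cyclic. Strictly speaking, the classical Nakayama lemma requires finite generation of the module in question, which is not available here since $R$ is not assumed noetherian; the paper's sketch glosses over this, whereas you handle it correctly by running the substitution $N=N\mathfrak{m}=N\mathfrak{m}^2=\cdots=N\mathfrak{m}^k=0$ directly through the hypothesis $\mathfrak{m}^k=0$. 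Your route is also structurally different in where the work is concentrated: instead of treating each right ideal separately via lifted generators, you first show $\mathfrak{m}=tR$ is itself principal, then use the two-sidedness of $\mathfrak{m}$ (via $RtR\subseteq tR$) to get $\mathfrak{m}^i=t^iR$, and finally derive the normal form $a=t^i\cdot(\text{unit})$, which classifies \emph{all} right ideals as the chain $R\supseteq\mathfrak{m}\supseteq\cdots\supseteq\mathfrak{m}^k=0$. This yields the strictly stronger conclusion that $R$ is a right chain ring, which is precisely the uniserial structure the paper needs later (in the proof of Theorem \ref{thm:kothe-endoartinian}, where local artinian PIRs are asserted to be uniserial); so your argument not only closes the gap in the paper's sketch but also proves, in passing, the refinement that the paper elsewhere takes for granted.
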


\begin{proof}
    \( (\Rightarrow) \). Trivial, as quotients of principal right ideal rings are principal.

    \( (\Leftarrow) \). Assume \( R/\mathfrak{m}^2 \) is a principal right ideal ring. Let \( x_1, \cdots, x_n \in \mathfrak{m} \) be such that their images generate all right ideals of \( R/\mathfrak{m}^2 \). Lifting generators and using Nakayama’s Lemma, we see that each right ideal of \( R \) is generated by one of the lifts up to \( \mathfrak{m}^2 \). An inductive argument on the nilpotency index \( k \) then shows that every right ideal is cyclic. Hence, \( R \) is a principal right ideal ring.
\end{proof}

The hypothesis that all $``$idempotents are central" and $``$PIR" in Theorem \ref{thm:kothe-endoartinian} is essential and cannot be omitted.

\begin{example} \label{ex:triangular-matrix-counterexample}
Let 
\[
R = \mathbb{T}_2 (\Bbbk) = \left\{ \begin{pmatrix} a & b \\ 0 & d \end{pmatrix} : a,b,d \in \Bbbk \right\}
\]
be the ring of $2\times 2$ upper triangular matrices over a field $\Bbbk$. Since $R$ is finite-dimensional over $\Bbbk$, it is right artinian and hence right endoartinian. Thus, $R$ satisfies condition (1) of Theorem \ref{thm:kothe-endoartinian}.

\begin{itemize}
    \item \emph{Failure of the Equivalent Conditions}. $R$ fails the conclusions of Theorem \ref{thm:kothe-endoartinian}.
\begin{itemize}
    \item $R$ is {not a Köthe ring} (condition (2)). For instance, the right $R$-module $\Bbbk^2$ (with the natural column vector action) is indecomposable but not cyclic: no single vector generates all of $\Bbbk^2$ under multiplication by upper triangular matrices.
    \item $R$ {does not decompose} as in condition (3). Suppose, for contradiction, that
    \[
    R \cong \prod_{i=1}^n R_i
    \]
    with each $R_i$ an artinian uniserial ring. Then all primitive idempotents in $R$ would be central, contradicting (B). Furthermore, the right ideals $e_{11}R$ and $e_{22}R$ are incomparable, which contradicts the uniserial property that such a decomposition would impose.
\end{itemize}
\end{itemize}

Moreover, $R$ fails the other two equivalent conditions of the theorem, demonstrating that its hypotheses are not superfluous.

\begin{itemize}
    \item \emph{Failure of the PIR Hypothesis}. Consider the right ideal
\[
I = e_{11}R + e_{12}R, \quad 
e_{11} = \begin{pmatrix} 1 & 0 \\ 0 & 0 \end{pmatrix}, \quad
e_{12} = \begin{pmatrix} 0 & 1 \\ 0 & 0 \end{pmatrix}.
\]
Explicitly,
\[
I = \left\{ \begin{pmatrix} a & b \\ 0 & 0 \end{pmatrix} : a,b \in \Bbbk \right\}.
\]
Suppose, for contradiction, that $I$ is principal, say $I = xR$ for some $x = \begin{pmatrix} \alpha & \beta \\ 0 & \gamma \end{pmatrix} \in R$.  
Then
\[
xR = \left\{ \begin{pmatrix} \alpha r_{11} & \alpha r_{12} + \beta r_{22} \\ 0 & \gamma r_{22} \end{pmatrix} : r_{11},r_{12},r_{22} \in \Bbbk \right\}.
\]
For $xR$ to equal $I$, we must have $\gamma = 0$ and $\alpha \neq 0$. But then $xR$ cannot generate $e_{12}$ independently of $e_{11}$, because every element in $xR$ has top-left entry a multiple of $\alpha$. Hence, no such $x$ exists, and $I$ is not principal. Thus, $R$ is not a PIR.
    \item \emph{Failure of the Central Idempotents Hypothesis}. The idempotent $e = e_{11}$ is not central. Indeed, for $x = e_{12}$,
\[
ex = e_{11} e_{12} = e_{12} \neq 0 = e_{12} e_{11} = xe.
\]
Hence $e$ does not commute with all elements of $R$, showing that not all idempotents in $R$ are central.
\end{itemize}

The ring $\mathbb{T}_2 (\Bbbk)$ demonstrates that a right endoartinian ring can fail both equivalent conditions (2) and (3) when the PIR and central idempotents hypotheses are not satisfied, confirming that these hypotheses are essential for the theorem's conclusions. While this example does not distinguish which hypothesis is more essential (as both fail), it definitely shows that endoartinianity alone is insufficient and that some additional structural hypotheses are required. \qed
\end{example}

The results in this section provide a partial answer to the general characterization of noncommutative K\"{o}the rings, an open question initially posed by K\"{o}the \cite{kothe1935verallgemeinerte} and discussed in Tuganbaev's survey \cite{tuganbaev1980rings}. Our theorems characterize K\"{o}the rings within an important classes of endoartinian rings: the principal ideal rings.

\bibliography{ref}

\end{document}